\documentclass[onefignum,onetabnum]{siamart171218}
\usepackage{algorithm}
\usepackage{overpic}
\usepackage{algpseudocode}
\usepackage{tikz}
\usepackage{pgfplots}
\usepackage{appendix}
\pgfplotsset{compat=1.18}
\usepackage{booktabs}

\usepackage{lipsum}
\usepackage{amsfonts}
\usepackage{graphicx}
\usepackage{epstopdf}
\usepackage{amssymb}

\newtheorem{assump}{Assumption}
\newtheorem{theo}{Theorem}
\newsiamthm{lem}{Lemma}
\ifpdf
  \DeclareGraphicsExtensions{.eps,.pdf,.png,.jpg}
\else
  \DeclareGraphicsExtensions{.eps}
\fi

\newsiamremark{remark}{Remark}
\newsiamremark{hypothesis}{Hypothesis}
\crefname{hypothesis}{Hypothesis}{Hypotheses}
\newsiamthm{claim}{Claim}

\headers{PGGR}{Zhiwei Gao, George Karniadakis}

\title{Proposal-Guided Greedy Surrogate Refinement for PDE-Driven High-Dimensional Rare-Event Estimation}

\author{Zhiwei Gao\thanks{Applied Mathematics, Brown University, Providence 
  (\email{zhiwei\_gao@brown.edu}).}
\and George Karniadakis\thanks{Applied Mathematics, Brown University, Providence 
  (\email{george\_karniadakis@brown.edu}).}
}

\usepackage{amsopn}

\makeatletter
\newcommand*{\addFileDependency}[1]{
  \typeout{(#1)}
  \@addtofilelist{#1}
  \IfFileExists{#1}{}{\typeout{No file #1.}}
}
\makeatother

\newcommand*{\myexternaldocument}[1]{%
    \externaldocument{#1}%
    \addFileDependency{#1.tex}%
    \addFileDependency{#1.aux}%
}

\ifpdf
\hypersetup{
  pdftitle={Proposal-Guided Greedy Surrogate Refinement for PDE-Driven High-Dimensional  Rare-Event Estimation},
  pdfauthor={Zhiwei Gao, George Karniadakis}
}
\fi

\myexternaldocument{ex_supplement}

\begin{document}

\maketitle

\begin{abstract}
Accurate surrogate construction for PDE-driven high-dimensional rare-event
simulation is challenging when performance evaluations are expensive. Since a
globally accurate surrogate may require many high-fidelity evaluations, adaptive
importance sampling provides a natural localization tool: its evolving proposal
distribution progressively identifies the failure-relevant region. Motivated by
this observation, we propose a surrogate-assisted adaptive importance sampling
framework that refines the surrogate locally along the evolving proposal, rather
than over the entire input space. The surrogate combines an encoder with a neural
network, providing a low-dimensional latent representation for both prediction
and sample selection. At each adaptive iteration, candidates drawn from the
current proposal are selected by a greedy latent-space rule balancing proximity
to the estimated failure boundary and sample diversity. The selected samples are evaluated by the high-fidelity model and used to refine the surrogate, which then guides the subsequent cross-entropy-type adaptive
proposal update. We establish
one-step proposal stability bounds under local surrogate errors, together with
surrogate-induced misclassification and finite-sample estimation error bounds.
Numerical experiments on multimodal benchmarks and PDE-driven rare-event
problems up to 100 dimensions show that the proposed method achieves accuracy
comparable to true-model adaptive importance sampling while requiring
substantially fewer high-fidelity evaluations.
\end{abstract}

\begin{keywords}
  rare-event simulation, adaptive importance sampling, surrogate modeling,
  active learning, high-dimensional uncertainty quantification
\end{keywords}

\begin{AMS}
  65C05, 65C20, 65N30, 60H35
\end{AMS}

\section{Introduction}

Rare-event simulation \cite{bucklew2004introduction} aims to estimate the
probability that a system enters a failure state under uncertain inputs. Let
$\mathbf{u}\in\Omega\subseteq\mathbb{R}^d$ be a random input with density
$p(\mathbf{u})$, and let failure be described by a performance function
$g(\mathbf{u})$, so that failure occurs when $g(\mathbf{u})\le 0$. The failure set
and failure probability are
\begin{equation}
\label{eq:failure_probability}
\Omega_{\mathcal F}
=
\{\mathbf{u}\in\Omega:g(\mathbf{u})\le 0\},
\qquad
P_{\mathcal F}
=
\int_{\Omega}
\mathbb{I}_{\Omega_{\mathcal F}}(\mathbf{u})\,
p(\mathbf{u})\,d\mathbf{u}.
\end{equation}
Estimating $P_{\mathcal F}$ is difficult when the event is rare, the failure set
has complicated geometry, or each evaluation of $g$ requires an expensive
simulation or PDE solve.

Direct Monte Carlo simulation \cite{rubino2009rare} is robust but becomes
prohibitively expensive when $P_{\mathcal F}$ is small. Importance sampling (IS)
improves efficiency by sampling from a biased proposal distribution that places
more probability mass in the failure-relevant region. Adaptive IS methods,
including cross-entropy (CE)-type methods
\cite{kurtz2013cross, PAPAIOANNOU2019106564, gao2025safe}, further construct a
sequence of proposal distributions that gradually move from the nominal density
toward the rare-event region. Related rare-event strategies, such as subset
simulation \cite{au2001estimation, bect2017bayesian}, sequential Monte Carlo
\cite{cerou2012sequential, papaioannou2016sequential}, large-deviation-based
adaptive IS, multilevel rare-event estimation, and multilevel sequential
importance sampling
\cite{TongStadler2023,WagnerLatzPapaioannouUllmann2020,UllmannPapaioannou2015,LiuLuZhou2015},
also rely on intermediate distributions or levels to guide samples toward the
rare event.

Although adaptive IS can substantially reduce estimator variance, it may still
require many evaluations of the true performance function during the adaptive
stage. This motivates surrogate-assisted rare-event methods, where part of the
expensive model evaluations are replaced by an approximation of $g$. Classical
surrogates, including response surfaces
\cite{faravelli1989response, rajashekhar1993new}, polynomial chaos expansions
(PCE) \cite{he2020adaptive, ehre2022sequential}, and Gaussian processes (GP)
\cite{balesdent2013kriging, echard2011ak}, have been combined with Monte Carlo,
subset simulation, and importance sampling methods. Surrogate-based CE updates,
reduced basis approximations, and multifidelity importance sampling have also
been developed for failure-probability estimation
\cite{LiXiu2010,LiLiXiu2011,ChenQuarteroni2013,PeherstorferCuiMarzoukWillcox2016}.

However, constructing reliable surrogates for high-dimensional rare-event
problems remains challenging. Classical surrogate models are often effective in
low or moderate dimensions, but their sample requirements can grow rapidly with
dimension. Neural-network surrogates provide a natural alternative in high
dimensions because of their expressive power and their ability to learn nonlinear
representations from complex simulation data. Nevertheless, standard neural
networks do not directly provide reliable posterior uncertainty estimates, which
makes classical uncertainty-based active learning less straightforward. Existing
active-learning surrogate methods
\cite{li2021active, xiang2020active, ehre2022sequential, peijuan2017new}
often refine the surrogate near the limit-state surface using acquisition
functions tied to probabilistic surrogates such as Gaussian processes. Extending
such strategies to neural-network surrogates usually requires additional
machinery, such as ensembles \cite{tripathy2018deep}, Bayesian neural networks
\cite{mackay1992practical}, or dropout-based approximations
\cite{srivastava2014dropout}.

In this work, we exploit a simple observation: adaptive importance sampling
already produces a data-driven localization mechanism. As the proposal
distribution evolves, it identifies the region that contributes most to the
rare-event estimator. Therefore, a globally accurate surrogate over the entire
input space is often unnecessary; it is more important to refine the surrogate
locally under the evolving proposal distribution, where surrogate errors most
directly affect the proposal update and the final probability estimate
\cite{gao2024adaptive, yan2021stein}.

Based on this observation, we propose a proposal-guided greedy surrogate
refinement framework for high-dimensional rare-event estimation. The surrogate
combines an encoder with a neural network, so that the encoder provides a
low-dimensional latent representation for both prediction and sample selection.
At each adaptive iteration, candidates are drawn from the current proposal
distribution. A greedy selection rule then chooses samples that are close to the
estimated failure boundary while maintaining diversity in the latent space.
These selected samples are evaluated by the high-fidelity model and added to the
training set, thereby refining the surrogate in the proposal-induced region. The
refined surrogate then defines the next intermediate target in an improved cross-entropy-type proposal
update.

The main contributions of this work are summarized as follows:
\begin{itemize}
    \item We propose a proposal-guided surrogate refinement framework for
    high dimensional rare-event estimation, where the evolving adaptive-IS
    proposal determines where the surrogate should be refined.

    \item We develop a neural-network surrogate with a latent-space greedy
    selection rule that balances boundary proximity and sample diversity without
    relying on uncertainty-based acquisition functions.

    \item We instantiate the framework using an ICE-vMFNM proposal update
    \cite{PAPAIOANNOU2019106564} and analyze one-step proposal stability,
    surrogate-induced misclassification, and finite-sample estimation error.

    \item We validate the method on multimodal, high-dimensional, and PDE-driven
    examples, showing that accurate rare-event estimates can be obtained with
    substantially fewer high-fidelity evaluations.
\end{itemize}

The remainder of the paper is organized as follows.
Section~\ref{sec:adaptive_importance_sampling} reviews adaptive importance
sampling and the proposal-evolution viewpoint. Section~\ref{sec:surrogate_assisted_proposal_refinement}
presents the proposed surrogate-assisted adaptive proposal refinement framework.
Section~\ref{sec:error_analysis} provides the error analysis.
Section~\ref{sec:numerical_experiments} reports numerical experiments, and
Section~\ref{sec:conclusion} concludes the paper.

\section{Adaptive Importance Sampling}
\label{sec:adaptive_importance_sampling}

We briefly review adaptive importance sampling for rare-event simulation and
introduce the proposal-evolution viewpoint used in the proposed method. Given a
proposal density \(q(\mathbf u)\), the failure probability can be written as
\begin{equation}
    P_{\mathcal F}
    =
    \int_{\Omega}
    \mathbb{I}_{\Omega_{\mathcal F}}(\mathbf u)
    \frac{p(\mathbf u)}{q(\mathbf u)}
    q(\mathbf u)\,d\mathbf u
    =
    \mathbb{E}_{q}
    \left[
    \mathbb{I}_{\Omega_{\mathcal F}}(\mathbf u)
    w(\mathbf u)
    \right],
    \label{eq:is_identity}
\end{equation}
where \(w(\mathbf u)=p(\mathbf u)/q(\mathbf u)\) is the likelihood ratio.
Given independent samples \(\mathbf u_i\sim q\), the standard importance
sampling estimator is
\begin{equation}
    \widehat P_{\mathcal F}^{\mathrm{IS}}
    =
    \frac{1}{N}
    \sum_{i=1}^{N}
    \mathbb{I}_{\Omega_{\mathcal F}}(\mathbf u_i)
    w(\mathbf u_i).
    \label{eq:is_estimator}
\end{equation}
The zero-variance proposal is $q^\star(\mathbf u)
    =
    \mathbb{I}_{\Omega_{\mathcal F}}(\mathbf u)p(\mathbf u)/
    P_{\mathcal F}
    $, 
but it is unavailable in practice because it depends on both the unknown failure
probability and the exact failure set. Adaptive importance sampling therefore
constructs a sequence of proposals that gradually concentrate on the
failure-relevant region.

At adaptive stage \(t\), the proposal \(q_t\) is used as a tractable
approximation of an intermediate target density \(\pi_t\). The intermediate
target is defined as
\begin{equation}
    \pi_t(\mathbf u)
    =
    \frac{
    h_t(\mathbf u)p(\mathbf u)
    }{
    Z_t
    },
    \qquad
    Z_t
    =
    \int_{\Omega}
    h_t(\mathbf u)p(\mathbf u)\,d\mathbf u,
    \label{eq:intermediate_target}
\end{equation}
where \(h_t(\mathbf u)\ge 0\) is an importance function. Initially,
\(q_0=\pi_0=p\), corresponding to \(h_0\equiv 1\). As the adaptive procedure
proceeds, \(h_t\) is chosen to give more weight to samples closer to the failure
region. In the limiting case, if \(h_t\) approaches the failure indicator
\(\mathbb I_{\Omega_{\mathcal F}}\), then \(\pi_t\) approaches the
zero-variance proposal \(q^\star\).

Since direct sampling from \(\pi_t\) is generally unavailable, \(q_t\) is
obtained by fitting a parametric density to \(\pi_t\) using samples from the
previous proposal \(q_{t-1}\). Abstractly, for \(t\ge 1\), this update can be
written as
\begin{equation}
    q_t
    =
    \mathcal U
    \left(
    q_{t-1},
    \left\{
    \mathbf u_i^{(t-1)},g(\mathbf u_i^{(t-1)})
    \right\}_{i=1}^{N_c}
    \right),
    \qquad
    \mathbf u_i^{(t-1)}\sim q_{t-1},
    \label{eq:general_ais_update}
\end{equation}
where \(\mathcal U\) denotes a proposal-update operator and $N_{c}$ denotes the number of samples used for update. Different adaptive
importance sampling methods correspond to different choices of the importance
function \(h_t\), the proposal family, and the update operator
\(\mathcal U\) \cite{XIAN2024102393}.

After the adaptive stage, the final proposal \(q_{\mathrm f}\) can be used in
the true-model importance sampling estimator
\begin{equation}
    \widehat P_{\mathcal F}
    =
    \frac{1}{N}
    \sum_{i=1}^{N}
    \mathbb{I}_{\{g(\mathbf u_i)\le 0\}}
    \frac{p(\mathbf u_i)}{q_{\mathrm f}(\mathbf u_i)},
    \qquad
    \mathbf u_i\sim q_{\mathrm f}.
    \label{eq:final_is_estimator}
\end{equation}

When evaluating \(g\) is expensive, the adaptive stage may still be costly.
Moreover, in high-dimensional problems, building a globally accurate surrogate
over the entire input space is often inefficient. The proposal sequence provides
a natural localization mechanism: as \(q_t\) concentrates near the
failure-relevant region, the surrogate only needs to be refined in the region
explored by the current proposal. This motivates the proposal-guided surrogate
refinement strategy introduced next.

\section{Surrogate-Assisted Adaptive Proposal Refinement}
\label{sec:surrogate_assisted_proposal_refinement}

Motivated by the proposal-induced localization discussed above, we introduce the
proposed surrogate-assisted adaptive proposal refinement framework. The key idea
is to use a neural-network surrogate and refine it along the evolving proposal
distributions rather than over the whole input space.

\subsection{Adaptive surrogate construction}
\label{subsec:surrogate_construction}

Before the adaptive proposal refinement starts, we first construct an initial
surrogate model using samples generated from the original input distribution.
Specifically, we draw an initial set of samples from \(p\) and evaluate the true
performance function:
\[
    \mathcal{D}_0
    =
    \{(\mathbf{u}_j^{(0)},g(\mathbf{u}_j^{(0)}))\}_{j=1}^{M_0},
    \qquad
    \mathbf{u}_j^{(0)}\sim p .
\]
This initial dataset is used to pretrain a surrogate model
\(\widehat g_0=\mathcal{N}(\cdot;\theta_0)\), which provides a starting
approximation of the performance function and is reused in the subsequent
adaptive procedure.

As the proposal distribution evolves, the surrogate should be refined in the
region explored by the current proposal rather than over the entire input space. Given the
current proposal distribution \(q_t\), the local surrogate refinement is based on
a regularized training objective of the form
\begin{equation}
\label{loss_function}
    \mathcal L_t(\theta)
    =
    \left\|
    \mathcal{N}(\mathbf{x};\theta) - g(\mathbf{x})
    \right\|^{2}_{L_{2}(q_{t})}
    +
    \lambda \,\mathcal{R}(\theta),
\end{equation}
where \(L_2(q_t)\) denotes the Hilbert space equipped with the measure \(q_t\),
\(\mathcal{R}(\theta)\) is a regularization term, and \(\lambda>0\) is the
regularization weight used during surrogate training. Starting from the previous
surrogate parameters, the model is updated by a gradient-based optimizer \cite{kingma2014adam} to
obtain the refined surrogate for the next proposal update. In the numerical
implementation, \(\lambda\) is updated within the surrogate optimization process
by a smoothed gradient-balancing rule. Thus, \(\lambda\) is not treated as a
quantity indexed by the outer adaptive iteration.

To discretize the local objective in \eqref{loss_function}, samples should be
drawn from the current proposal distribution \(q_t\). However, evaluating the
true performance function for all proposal samples would be expensive. Therefore,
instead of labeling all samples from \(q_t\), we first generate a candidate pool
from \(q_t\) and then select a small subset of informative samples for
high-fidelity evaluation. These selected samples are added to the training set
and used to refine the surrogate in the proposal-induced region.

In high-dimensional settings, uncertainty estimation for standard neural-network
surrogates is generally not directly available, which makes classical
uncertainty-based active learning strategies difficult to apply. Moreover,
distance-based or diversity-based criteria in the original input space may
become less effective due to the curse of dimensionality. To address this issue,
we perform sample selection in a learned latent space.

Specifically, we decompose the neural-network surrogate into an encoder and a
feedforward prediction network: $\mathcal{N}(\mathbf{x};\theta)
    =
    \mathcal{F}_{\phi}
    \bigl(
    \mathcal{E}_{\psi}(\mathbf{x})
    \bigr)$, 
where \(\mathcal{E}_{\psi}\) maps the input \(\mathbf{x}\) to a latent
representation, \(\mathcal{F}_{\phi}\) maps the latent representation to the
predicted performance value, and \(\theta=(\psi,\phi)\). The encoder provides a low-dimensional representation used both for prediction
and for measuring sample diversity in the greedy selection step. The details of
the selection strategy are presented next.

\subsection{Greedy sample selection in latent space}
\label{greedy_selection}

To select informative samples for surrogate refinement, we use two criteria:
proximity to the estimated failure boundary and diversity among selected samples.
The proximity term targets points where surrogate sign errors most directly
affect failure classification, while the diversity term avoids redundant samples
and improves coverage of the proposal-induced important region.

At adaptive refinement iteration \(t\ge 1\), suppose that the current proposal
\(q_t\), the available surrogate \(\widehat g_{t-1}\), and the current training
dataset \(\mathcal D_{t-1}\) are given. Let the candidate pool generated from the
current proposal distribution \(q_t\) be denoted by
\begin{equation}
    \mathcal{C}_t
    =
    \{\mathbf{x}_i^{(t)}\}_{i=1}^{N_c},
    \qquad
    \mathbf{x}_i^{(t)} \sim q_t .
    \label{eq:candidate_pool}
\end{equation}
The surrogate used for selection is written as $\widehat g_{t-1}(\mathbf{x})
    =
    \mathcal{F}_{\phi_{t-1}}\bigl(\mathcal{E}_{\psi_{t-1}}(\mathbf{x})\bigr)$, 
where $\mathcal{E}_{\psi_{t-1}}$ is the encoder and
$\mathcal{F}_{\phi_{t-1}}$ is the prediction network. The encoder maps a
high-dimensional input $\mathbf{x}\in\mathbb{R}^d$ to a low-dimensional latent
representation $\mathbf{z}
    =
    \mathcal{E}_{\psi_{t-1}}(\mathbf{x})$. 
Suppose the current training dataset is
 $\mathcal{D}_{t-1}
    =
    \{(\mathbf{x}_j,g(\mathbf{x}_j))\}_{j=1}^{M_{t-1}}$. For each candidate $\mathbf{x}\in\mathcal{C}_t$, the proximity to the estimated
failure boundary is measured by $|\widehat g_{t-1}(\mathbf{x})|$. A smaller value
indicates that the candidate is closer to the current approximation of the
failure boundary. To measure diversity, we compute distances in the latent space.
Given a reference set $\mathcal{R}\subset \mathbb{R}^d$, define
\begin{equation}
    d_t(\mathbf{x},\mathcal{R})
    =
    \min_{\mathbf{y}\in \mathcal{R}}
    \left\|
        \mathcal{E}_{\psi_{t-1}}(\mathbf{x})
        -
        \mathcal{E}_{\psi_{t-1}}(\mathbf{y})
    \right\|_2 .
    \label{eq:latent_distance}
\end{equation}
During the greedy selection process, the reference set is updated as $\mathcal{R}_t^{(k)}
    =
    \{\mathbf{x}_j : (\mathbf{x}_j,g(\mathbf{x}_j))\in \mathcal{D}_{t-1}\}
    \cup
    \mathcal{A}_t^{(k)}$, 
where $\mathcal{A}_t^{(k)}$ denotes the set of samples already selected after
$k$ greedy steps. Thus, the diversity term accounts for both the existing
training samples and the newly selected samples.

Before computing the greedy score, both the boundary-proximity term
\(|\widehat g_{t-1}(\mathbf{x})|\) and the latent-space distance
\(d_t(\mathbf{x},\mathcal R_t^{(k)})\) are normalized to \([0,1]\) over the
remaining candidate pool \(\mathcal C_t\setminus\mathcal A_t^{(k)}\). We then
define the greedy score as
\begin{equation}
    S_t^{(k)}(\mathbf{x})
    =
    -
    \widetilde{|\widehat g_{t-1}|}(\mathbf{x})
    +
    \beta
    \widetilde d_t(\mathbf{x},\mathcal R_t^{(k)}),
    \label{eq:greedy_score}
\end{equation}
where \(\widetilde{|\widehat g_{t-1}|}\) and \(\widetilde d_t\) denote the
normalized boundary-proximity and diversity terms, respectively. The parameter
\(\beta\ge 0\) balances boundary proximity and latent-space diversity.
The first term favors candidates close to the estimated boundary
\(\widehat g_{t-1}(\mathbf{x})=0\), while the second promotes latent-space
diversity and avoids redundant enrichment
\cite{echard2011ak, peijuan2017new, xiang2020active}.

Starting from $\mathcal{A}_t^{(0)}=\emptyset$, we iteratively select
\[
    \mathbf{x}^{\star}
    =
    \arg\max_{\mathbf{x}\in \mathcal{C}_t\setminus \mathcal{A}_t^{(k)}}
    S_t^{(k)}(\mathbf{x}),
\]
and update $\mathcal{A}_t^{(k+1)}
    =
    \mathcal{A}_t^{(k)}\cup \{\mathbf{x}^{\star}\}$.
This procedure is repeated until $|\mathcal{A}_t|=m_t$, where $m_t$ is the
number of high-fidelity evaluations added at iteration $t$.

The selected samples are then evaluated by the true performance function $g$,
and the dataset is updated as
\begin{equation}
    \mathcal{D}_{t}
    =
    \mathcal{D}_{t-1}
    \cup
    \{(\mathbf{x},g(\mathbf{x})):\mathbf{x}\in \mathcal{A}_t\}.
    \label{eq:dataset_update}
\end{equation}
The updated dataset $\mathcal{D}_{t}$ is subsequently used to refine the
surrogate, yielding $\widehat g_t$, which is then used in the next ICE-vMFNM
proposal update. The greedy rule for constructing $\mathcal A_t$ is summarized
in Algorithm~\ref{alg:greedy_latent_selection}.

\begin{algorithm}[t]
\caption{Greedy sample selection in latent space}
\label{alg:greedy_latent_selection}
\begin{algorithmic}[1]
\Require Candidate pool $\mathcal{C}_t$, available surrogate
$\widehat g_{t-1}=\mathcal{F}_{\phi_{t-1}}\circ\mathcal{E}_{\psi_{t-1}}$,
dataset $\mathcal{D}_{t-1}$, batch size $m_t$, and weight $\beta\ge 0$
\Ensure Selected sample set $\mathcal{A}_t$

\State Initialize the selected set $\mathcal{A}_t^{(0)} \gets \emptyset$.

\For{$k=0,1,\ldots,m_t-1$}

    \State Construct the current reference set $\mathcal{R}_t^{(k)}
        =
        \{\mathbf{x}_j:(\mathbf{x}_j,g(\mathbf{x}_j))\in\mathcal{D}_{t-1}\}
        \cup
        \mathcal{A}_t^{(k)}$. 

    \State For each
    $\mathbf{x}\in\mathcal{C}_t\setminus\mathcal{A}_t^{(k)}$,
    compute its latent distance
    $d_t(\mathbf{x},\mathcal{R}_t^{(k)})$ by \eqref{eq:latent_distance}.

    \State Compute the greedy score
    $S_t^{(k)}(\mathbf{x})$ by \eqref{eq:greedy_score}.

    \State Select the candidate with the largest greedy score:
    \[
        \mathbf{x}^{\star}
        =
        \arg\max_{\mathbf{x}\in\mathcal{C}_t\setminus\mathcal{A}_t^{(k)}}
        S_t^{(k)}(\mathbf{x}) .
    \]

    \State Update the selected set: $\mathcal{A}_t^{(k+1)}
        \gets
        \mathcal{A}_t^{(k)} \cup \{\mathbf{x}^{\star}\}$.

\EndFor

\State Set $\mathcal{A}_t \gets \mathcal{A}_t^{(m_t)}$.

\State \Return $\mathcal{A}_t$.

\end{algorithmic}
\end{algorithm}
\subsection{Improved cross-entropy implementation of the proposed framework}
\label{subsec:surrogate_ce_update}

We now describe how the proposal-guided surrogate refinement is combined with an
improved cross-entropy-type proposal update. In this subsection, the improved
cross-entropy (ICE) method is used as a concrete realization of the general
update operator \(\mathcal U\) in \eqref{eq:general_ais_update}. 

We use a parametric mixture proposal distribution. For \(t\ge 1\), the proposal
is chosen from a \(K\)-component vMFNM mixture
\cite{PAPAIOANNOU2019106564} family:
\[
    q_t(\mathbf{u})
    =
    q(\mathbf{u};\boldsymbol{\eta}_t)
    =
    \sum_{k=1}^{K}
    w_{k,t}
    q_{\mathrm{vMFNM}}(\mathbf{u};
    \boldsymbol{\mu}_{k,t},
    \kappa_{k,t},
    m_{k,t},
    \Omega_{k,t}),
\]
where $\boldsymbol{\eta}_t
    =
    \left\{
    w_{k,t},
    \boldsymbol{\mu}_{k,t},
    \kappa_{k,t},
    m_{k,t},
    \Omega_{k,t}
    \right\}_{k=1}^{K}$ 
denotes the proposal parameters, with \(w_{k,t}\ge 0\) and
\(\sum_{k=1}^{K}w_{k,t}=1\). The mixture structure allows the proposal to
represent multiple important regions, while each vMFNM component captures
directional and radial concentration in high-dimensional spaces.

The algorithm starts from an initial dataset \(\mathcal D_0\) and an initial
surrogate \(\widehat g_0\) trained under the nominal density \(p\). We set
\(q_0=p\). For each adaptive stage \(t\ge 0\), we generate a candidate pool from
the current proposal:
\[
    \mathcal C_t
    =
    \{\mathbf u_i^{(t)}\}_{i=1}^{N_c},
    \qquad
    \mathbf u_i^{(t)}\sim q_t .
\]
At the initialization stage \(t=0\), the pool \(\mathcal C_0\), together with
\(\widehat g_0\), is used to define the first intermediate target \(\pi_1\) and
to fit the first adaptive proposal \(q_1\).

For \(t\ge 1\), suppose that the current proposal \(q_t\), the available
surrogate \(\widehat g_{t-1}\), and the current training dataset
\(\mathcal D_{t-1}\) are given. The candidate pool \(\mathcal C_t\) is first used
for greedy sample selection based on \(\widehat g_{t-1}\), producing a selected
set \(\mathcal A_t\subset \mathcal C_t\). The true performance function is
evaluated only on the selected samples in \(\mathcal A_t\), and the dataset is
updated according to Eq.~\eqref{eq:dataset_update}. The surrogate is then refined
using the updated dataset \(\mathcal D_t\), yielding \(\widehat g_t\).

Specifically, given the refined surrogate \(\widehat g_t\), we define the soft
importance function used in the ICE strategy
\cite{PAPAIOANNOU2019106564,zhang2025ice} by
\begin{equation}
\label{soft_indicator}
    h_{t+1}(\mathbf u)
    =
    \Phi
    \left(
    -\frac{\widehat g_t(\mathbf u)}{\sigma_{t+1}}
    \right),
\end{equation}
where \(\Phi\) is the standard normal cumulative distribution function and
\(\sigma_{t+1}>0\) is a smoothing parameter. Following the ICE strategy,
\(\sigma_{t+1}\) is selected by controlling the coefficient of variation of the
importance weights. Given \(\mathbf u_i^{(t)}\sim q_t\), we choose
\[
    \sigma_{t+1}
    =
    \arg\min_{\sigma\in(0,\sigma_t)}
    \left(
    \delta_{W_{t+1}}(\sigma)-\delta_{\mathrm{target}}
    \right)^2,
\]
where
\[
    W_i^{(t+1)}(\sigma)
    =
    \Phi
    \left(
    -\frac{\widehat g_t(\mathbf u_i^{(t)})}{\sigma}
    \right)
    \frac{p(\mathbf u_i^{(t)})}{q_t(\mathbf u_i^{(t)})},
\]
and \(\delta_{W_{t+1}}(\sigma)\) denotes the sample coefficient of variation of
\(\{W_i^{(t+1)}(\sigma)\}_{i=1}^{N_c}\). Here \(\sigma_0>0\) is prescribed as an
initial smoothing upper bound. With \(\sigma_{t+1}\) determined, the corresponding
intermediate target \(\pi_{t+1}\) is defined according to
\eqref{eq:intermediate_target}.

The stopping criterion follows the ICE rule. Since the soft indicator should
eventually approximate the failure indicator, we define the diagnostic weights
\begin{equation}
\label{stop_criteria}
    W_{t+1,i}^{\ast}
    =
    \frac{
    \mathbb{I}_{\{\widehat g_t(\mathbf u_i^{(t)})\le 0\}}
    }{
    h_{t+1}(\mathbf u_i^{(t)})
    },
    \qquad
    \mathbf u_i^{(t)}\sim q_t .
\end{equation}
If the sample coefficient of variation
\(\delta_{W_{t+1}^{\ast}}\) is below a prescribed stopping tolerance
\(\delta_{\mathrm{stop}}\), then the current proposal is regarded as sufficiently
close to the failure-relevant target. In this case, we set $q_{\mathrm f}=q_t,
    \widehat g_{\mathrm f}=\widehat g_t$, 
and terminate the adaptive stage.

In the numerical implementation, the denominator in \eqref{stop_criteria} is
bounded away from zero by a small positive constant, i.e., $h_{t+1}(\mathbf u_i^{(t)})
    \leftarrow
    \max\{h_{t+1}(\mathbf u_i^{(t)}),\varepsilon_h\}$, 
and the resulting weights are normalized after evaluation. This safeguard is used
only for numerical stability and is not part of the idealized theoretical
formulation.

If the stopping criterion is not satisfied, the next proposal \(q_{t+1}\) is
obtained by fitting the vMFNM mixture distribution to \(\pi_{t+1}\). Following
the cross-entropy principle, this update is formulated as the KL projection
\begin{equation}
    \boldsymbol{\eta}_{t+1}
    =
    \arg\min_{\boldsymbol{\eta}}
    \operatorname{KL}
    \left(
    \pi_{t+1}
    \,\|\,
    q(\cdot;\boldsymbol{\eta})
    \right)
    =
    \arg\max_{\boldsymbol{\eta}}
    \mathbb{E}_{\pi_{t+1}}
    \left[
    \log q(\mathbf u;\boldsymbol{\eta})
    \right].
    \label{eq:ce_kl_vMFNM}
\end{equation}
Using the same candidate pool \(\mathcal C_t\), this expectation is approximated
by importance reweighting, which gives the weighted maximum-likelihood update
\begin{equation}
    \boldsymbol{\eta}_{t+1}
    =
    \arg\max_{\boldsymbol{\eta}}
    \sum_{i=1}^{N_c}
    \bar W_i^{(t+1)}
    \log q(\mathbf u_i^{(t)};\boldsymbol{\eta}),
    \qquad
    \mathbf u_i^{(t)}\sim q_t,
    \label{eq:surrogate_ce_update}
\end{equation}
where
\[
    \bar W_i^{(t+1)}
    =
    \frac{
    W_i^{(t+1)}
    }{
    \sum_{j=1}^{N_c}W_j^{(t+1)}
    },
    \qquad
    W_i^{(t+1)}
    =
    h_{t+1}(\mathbf u_i^{(t)})
    \frac{
    p(\mathbf u_i^{(t)})
    }{
    q_t(\mathbf u_i^{(t)})
    }.
\]
The parameters \(\boldsymbol{\eta}_{t+1}\) are obtained by applying a weighted EM
procedure~\cite{PAPAIOANNOU2019106564}. Therefore,
\[
    q_{t+1}
    =
    \mathcal{U}_{\mathrm{ICE\text{-}vMFNM}}
    \left(
    q_t,
    \widehat g_t
    \right)
    =
    q(\cdot;\boldsymbol{\eta}_{t+1}),
\]
where \(\boldsymbol{\eta}_{t+1}\) is computed from
\eqref{eq:surrogate_ce_update}. The proposal \(q_{t+1}\) is then used in the next
adaptive refinement iteration. If the maximum number of adaptive iterations is
reached before the stopping criterion is satisfied, the most recently obtained
proposal and surrogate are used as \(q_{\mathrm f}\) and \(\widehat g_{\mathrm f}\).

The whole procedure is illustrated in Fig.~\ref{fig:pggr} and summarized in
Algorithm~\ref{alg:surrogate_ce_vmfnm}.

\begin{figure}[htbp]
    \centering
    \includegraphics[width=0.75\linewidth]{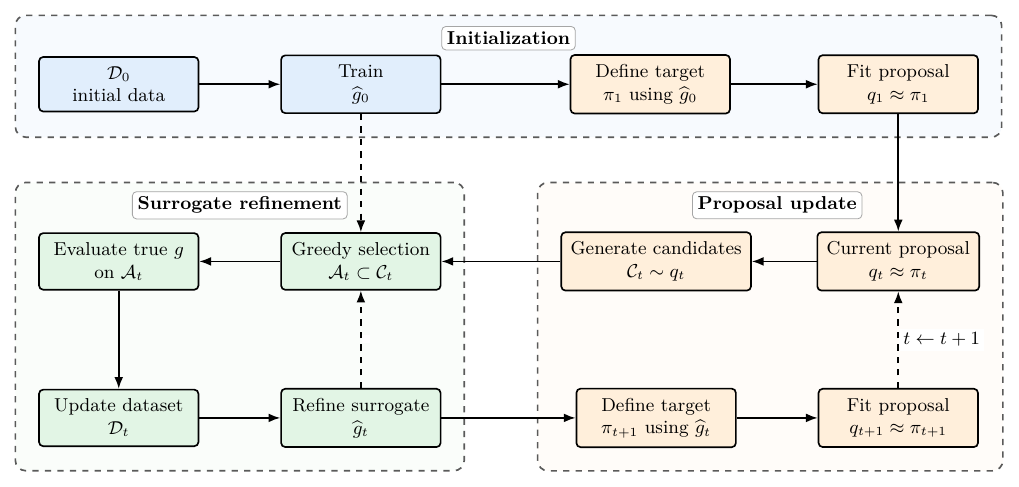}
    \caption{Schematic of the proposal-guided greedy surrogate refinement procedure.
The proposal \(q_t\) is fitted to the intermediate target \(\pi_t\). Candidate
samples are generated from \(q_t\), and the available surrogate
\(\widehat g_{t-1}\) is used to select informative samples for high-fidelity
evaluation. The updated dataset \(\mathcal D_t\) yields the refined surrogate
\(\widehat g_t\), which defines the soft importance function \(h_{t+1}\) and the
corresponding stopping diagnostic. If the stopping criterion is not satisfied,
the next target \(\pi_{t+1}\) is fitted to obtain the next proposal \(q_{t+1}\).}
    \label{fig:pggr}
\end{figure}

\begin{algorithm}[t]
\caption{PGGR-ICE-vMFNM adaptive importance sampling}
\label{alg:surrogate_ce_vmfnm}
\begin{algorithmic}[1]
\Require Initial size \(M_0\), candidate size \(N_c\), batch size \(m_t\),
target CoV \(\delta_{\mathrm{target}}\), stopping tolerance
\(\delta_{\mathrm{stop}}\), and maximum iteration number \(T_{\max}\)
\Ensure Final proposal \(q_{\mathrm f}\), final surrogate \(\widehat g_{\mathrm f}\),
and estimate \(\widehat P_{\mathcal F}^{\mathrm{surr}}\)

\State Generate \(\mathcal D_0=\{(\mathbf u_j^{(0)},g(\mathbf u_j^{(0)}))\}_{j=1}^{M_0}\), with \(\mathbf u_j^{(0)}\sim p\).
\State Train \(\widehat g_0\) using \(\mathcal D_0\) and set \(q_0=p\).
\State Generate \(\mathcal C_0=\{\mathbf u_i^{(0)}\}_{i=1}^{N_c}\), with \(\mathbf u_i^{(0)}\sim q_0\).
\State Use \(\widehat g_0\) and \(\mathcal C_0\) to choose \(\sigma_1\), define \(h_1\), and fit \(q_1\) by \eqref{eq:surrogate_ce_update}.

\For{\(t=1,\ldots,T_{\max}\)}
    \State Generate \(\mathcal C_t=\{\mathbf u_i^{(t)}\}_{i=1}^{N_c}\), with \(\mathbf u_i^{(t)}\sim q_t\).
    \State Select \(\mathcal A_t\subset\mathcal C_t\) using \(\widehat g_{t-1}\) and Algorithm~\ref{alg:greedy_latent_selection}.
    \State Evaluate \(g(\mathbf u)\) for \(\mathbf u\in\mathcal A_t\) and update \(\mathcal D_t\) by \eqref{eq:dataset_update}.
    \State Refine the surrogate using \(\mathcal D_t\) to obtain \(\widehat g_t\).
    \State Use \(\mathcal C_t\) and \(\widehat g_t\) to choose \(\sigma_{t+1}\), define \(h_{t+1}\), and compute \(\delta_{W_{t+1}^{\ast}}\) by \eqref{stop_criteria}.

    \If{\(\delta_{W_{t+1}^{\ast}}\le \delta_{\mathrm{stop}}\)}
        \State Set \(q_{\mathrm f}=q_t\), \(\widehat g_{\mathrm f}=\widehat g_t\), and terminate the adaptive stage.
        \State \textbf{break}
    \Else
        \State Fit \(q_{t+1}\) by \eqref{eq:surrogate_ce_update}.
    \EndIf
\EndFor

\If{the stopping criterion is not met before \(T_{\max}\)}
    \State Set \(q_{\mathrm f}=q_{T_{\max}+1}\) and \(\widehat g_{\mathrm f}=\widehat g_{T_{\max}}\).
\EndIf

\State Generate final samples \(\mathbf u_i\sim q_{\mathrm f}\) and compute
\(\widehat P_{\mathcal F}^{\mathrm{surr}}\) by \eqref{eq:surrogate_final_estimator}.
\end{algorithmic}
\end{algorithm}

\paragraph{Surrogate-based final estimator}
After the adaptive proposal update stops, let \(q_{\mathrm f}\) and
\(\widehat g_{\mathrm f}\) denote the resulting final proposal distribution and
final refined surrogate, respectively. To reduce the number of high-fidelity
evaluations, the proposed method uses the surrogate-induced failure indicator in
the final probability estimator:
\begin{equation}
    \widehat P_{\mathcal F}^{\mathrm{surr}}
    =
    \frac{1}{N}
    \sum_{i=1}^{N}
    \mathbb{I}_{\{\widehat g_{\mathrm f}(\mathbf u_i)\le 0\}}
    \frac{p(\mathbf u_i)}{q_{\mathrm f}(\mathbf u_i)},
    \qquad
    \mathbf u_i\sim q_{\mathrm f}.
    \label{eq:surrogate_final_estimator}
\end{equation}
Here, the true performance function \(g\) is evaluated only for the initial
training samples and the greedily selected refinement samples. Therefore, the
high-fidelity cost consists of the initial training evaluations and the selected
samples added during the adaptive refinement stage; the precise reporting
convention is specified in Section~\ref{sec:numerical_experiments}.

\section{Theoretical Analysis}
\label{sec:error_analysis}

In this section, we analyze how the surrogate affects the proposed
surrogate-assisted adaptive importance sampling framework. The surrogate enters
the method in two distinct ways. First, during the adaptive stage, it is used to
construct the soft importance function and therefore influences the proposal
update. Second, after the adaptive stage, the final estimator uses the
surrogate-induced failure indicator, which may introduce a misclassification
bias.

At adaptive stage \(t\), the relevant quantity is the local surrogate error
\(\|g-\widehat g_t\|_{L^2(q_t)}\) under the current proposal distribution. This
error affects the stopping diagnostic and, when a new proposal is fitted, the
surrogate-driven ICE update. We then state a misclassification bound and a
finite-sample error decomposition for the final surrogate-based importance
sampling estimator. The proofs are deferred to Appendix~\ref{app:proofs}.

\subsection{One-step stability of the surrogate-driven ICE proposal update}
\label{subsec:one_step_stability}

At adaptive stage \(t\ge 0\), suppose that \(q_t\) is the current proposal and
that \(\widehat g_t\) is the surrogate used to fit the next intermediate target.
For the smoothed ICE update, define the true and surrogate-based soft importance
functions by
\begin{equation}
    h_{t+1}^\star(\mathbf u)
    =
    \Phi\left(
    -\frac{g(\mathbf u)}{\sigma_{t+1}}
    \right),
    \qquad
    \widehat h_{t+1}(\mathbf u)
    =
    \Phi\left(
    -\frac{\widehat g_t(\mathbf u)}{\sigma_{t+1}}
    \right),
    \label{eq:true_surrogate_soft_indicators}
\end{equation}
where \(\Phi\) is the standard normal cumulative distribution function and
\(\sigma_{t+1}>0\) is the smoothing parameter. The corresponding intermediate
target densities are
\begin{equation}
    \pi_{t+1}^\star(\mathbf u)
    =
    \frac{
    h_{t+1}^\star(\mathbf u)p(\mathbf u)
    }{
    Z_{t+1}^\star
    },
    \qquad
    \widehat \pi_{t+1}(\mathbf u)
    =
    \frac{
    \widehat h_{t+1}(\mathbf u)p(\mathbf u)
    }{
    \widehat Z_{t+1}
    },
    \label{eq:true_surrogate_intermediate_targets}
\end{equation}
where
\[
    Z_{t+1}^\star
    =
    \int h_{t+1}^\star(\mathbf u)p(\mathbf u)\,d\mathbf u,
    \qquad
    \widehat Z_{t+1}
    =
    \int \widehat h_{t+1}(\mathbf u)p(\mathbf u)\,d\mathbf u .
\]

Let \(\Pi_{\mathcal Q}\) denote the population cross-entropy projection onto the
proposal family \(\mathcal Q\),
\[
    \Pi_{\mathcal Q}[\pi]
    =
    \arg\min_{q\in\mathcal Q}
    \operatorname{KL}(\pi\|q).
\]
The proposal that would be obtained using the true performance function is
\(q_{t+1}^{\star}
    =
    \Pi_{\mathcal Q}[\pi_{t+1}^\star]\),
whereas the surrogate-driven update gives
\(\widehat q_{t+1}
    =
    \Pi_{\mathcal Q}[\widehat \pi_{t+1}]\). We use the following local assumptions.

\begin{assump}[Likelihood-ratio control]
\label{ass:lr_control}
There exists \(C_w^{(2)}<\infty\) such that
\begin{equation}
    \left\|
    \frac{p}{q_t}
    \right\|_{L^2(q_t)}^2
    =
    \int
    \frac{p(\mathbf u)^2}{q_t(\mathbf u)}
    d\mathbf u
    \le C_w^{(2)} .
    \label{eq:lr_control}
\end{equation}
\end{assump}

\begin{assump}[Mass lower bound]
\label{ass:soft_mass_lower_bound}
There exists \(z_{t+1}>0\) such that
\begin{equation}
    Z_{t+1}^\star
    =
    \int h_{t+1}^\star(\mathbf u)p(\mathbf u)\,d\mathbf u
    \ge z_{t+1} .
    \label{eq:soft_mass_lower_bound}
\end{equation}
\end{assump}

\begin{assump}[Local stability of the cross-entropy projection]
\label{ass:projection_stability}
There exists \(L_{\mathcal Q}<\infty\) such that, for probability measures
\(\mu\) and \(\nu\) in a neighborhood of the intermediate targets,
\begin{equation}
    \operatorname{TV}
    \left(
    \Pi_{\mathcal Q}[\mu],
    \Pi_{\mathcal Q}[\nu]
    \right)
    \le
    L_{\mathcal Q}
    \operatorname{TV}(\mu,\nu).
    \label{eq:projection_stability}
\end{equation}
\end{assump}

\begin{theo}[Conditional one-step proposal stability]
\label{thm:one_step_proposal_stability}
Under Assumptions~\ref{ass:lr_control}--\ref{ass:projection_stability}, the
surrogate-driven ICE update satisfies
\begin{equation}
    \operatorname{TV}
    \left(
    q_{t+1}^{\star},
    \widehat q_{t+1}
    \right)
    \le
    \frac{
    C_{\mathrm{stab}}
    }{
    z_{t+1}\sigma_{t+1}
    }
    \left\|
    g-\widehat g_t
    \right\|_{L^2(q_t)},
    \label{eq:one_step_proposal_stability}
\end{equation}
where \(
    C_{\mathrm{stab}}
    =
    2L_{\mathcal Q}\sqrt{C_w^{(2)}}/
    \sqrt{2\pi}
    \). 
\end{theo}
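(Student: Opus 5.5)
The plan is to chain three elementary estimates: projection stability, a perturbation bound for normalized densities, and Lipschitz continuity of \(\Phi\) combined with Cauchy--Schwarz under \(q_t\).

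\emph{Step 1: reduce to the targets.} By Assumption~\ref{ass:projection_stability}, applied with \(\mu=\pi_{t+1}^\star\) and \(\nu=\widehat\pi_{t+1}\),
\[
\operatorname{TV}(q_{t+1}^\star,\widehat q_{t+1})\le L_{\mathcal Q}\operatorname{TV}(\pi_{t+1}^\star,\widehat\pi_{t+1}).
\]
Both targets are well defined: \(Z_{t+1}^\star\ge z_{t+1}>0\) by Assumption~\ref{ass:soft_mass_lower_bound}, and \(\widehat Z_{t+1}>0\) because \(\Phi>0\).

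\emph{Step 2: normalized-density perturbation.} Write \(a=h_{t+1}^\star p\), \(b=\widehat h_{t+1}p\), \(A=Z_{t+1}^\star=\int a\) and \(B=\widehat Z_{t+1}=\int b\). Pointwise,
\[
\left|\frac aA-\frac bB\right|\le \frac{|a-b|}{A}+b\left|\frac1A-\frac1B\right|.
\]
Integrating and using \(\int b=B\) together with \(|B-A|\le\int|a-b|\) gives
\[
\int\left|\frac aA-\frac bB\right|\,d\mathbf u\le \frac{2}{A}\int|a-b|\,d\mathbf u .
\]
Hence
\[
\operatorname{TV}(\pi^\star_{t+1},\widehat\pi_{t+1})\le \frac{2}{z_{t+1}}\int|h_{t+1}^\star-\widehat h_{t+1}|\,p\,d\mathbf u
\]
if TV is taken as the \(L^1\) distance. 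With the \(\tfrac12\) convention the factor \(2\) drops out, and the result is a sharper version of the stated constant. I will carry the factor \(2\) so as to match \(C_{\mathrm{stab}}\) exactly.

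\emph{Step 3: Lipschitz bound and change of measure.} Since \(\Phi'=\varphi\le 1/\sqrt{2\pi}\), the mean value theorem gives
\[
|h_{t+1}^\star-\widehat h_{t+1}|\le \frac{|g-\widehat g_t|}{\sqrt{2\pi}\,\sigma_{t+1}}
\]
pointwise. Then rewrite \(\int|g-\widehat g_t|\,p=\int|g-\widehat g_t|\,(p/q_t)\,q_t\) and apply Cauchy--Schwarz in \(L^2(q_t)\):
\[
\int|g-\widehat g_t|\,p\,d\mathbf u\le \|g-\widehat g_t\|_{L^2(q_t)}\,\Bigl\|\frac p{q_t}\Bigr\|_{L^2(q_t)}\le \sqrt{C_w^{(2)}}\,\|g-\widehat g_t\|_{L^2(q_t)},
\]
where the last inequality is Assumption~\ref{ass:lr_control}. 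Combining Steps 1--3 gives \eqref{eq:one_step_proposal_stability} with \(C_{\mathrm{stab}}=2L_{\mathcal Q}\sqrt{C_w^{(2)}}/\sqrt{2\pi}\).

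\emph{Main obstacle.} The computations are routine; the delicate point is justifying Assumption~\ref{ass:projection_stability}, which only holds locally. One must ensure \(\widehat\pi_{t+1}\) lies in the neighborhood where it applies. I would handle this by reading the theorem as conditional, as its title indicates: either the bound is asserted whenever \(\widehat\pi_{t+1}\) lies in that neighborhood, or one notes that Steps 2--3 force \(\operatorname{TV}(\pi^\star_{t+1},\widehat\pi_{t+1})\) to be small when the local surrogate error is small, which places \(\widehat\pi_{t+1}\) in the neighborhood. A secondary point is the absolute continuity \(p\ll q_t\) needed for the change of measure; this is implicit in Assumption~\ref{ass:lr_control}, since \(\int p^2/q_t<\infty\) forces \(q_t>0\) wherever \(p>0\), up to null sets.
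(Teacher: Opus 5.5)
Your proposal is correct and follows essentially the same route as the paper's proof. Both use the pointwise bound $|h_{t+1}^\star-\widehat h_{t+1}|\le |g-\widehat g_t|/(\sqrt{2\pi}\,\sigma_{t+1})$, the change of measure to $q_t$ with Cauchy--Schwarz and Assumption~\ref{ass:lr_control}, the normalization inequality with factor $2/Z_{t+1}^\star\le 2/z_{t+1}$, and finally Assumption~\ref{ass:projection_stability}. The differences are cosmetic: you run the chain in reverse order and prove the normalization inequality that the paper only cites, and your remarks on the TV convention and on the locality of the projection assumption are accurate refinements.
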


Theorem~\ref{thm:one_step_proposal_stability} shows that the perturbation of one
ICE proposal update is controlled by the local surrogate error
\(\|g-\widehat g_t\|_{L^2(q_t)}\), rather than by a global error under the
nominal distribution. This supports the proposal-guided refinement strategy:
samples from \(q_t\) are used to refine the surrogate, and the refined surrogate
\(\widehat g_t\) is then used to form the next intermediate target
\(\pi_{t+1}\) and proposal \(q_{t+1}\). The factor \(1/\sigma_{t+1}\) further
indicates that later ICE stages are more sensitive to surrogate error as the
soft indicator becomes sharper.

\begin{remark}
Theorem~\ref{thm:one_step_proposal_stability} is a conditional one-step
population stability result. Under a uniform local stability condition, the
accumulated proposal drift over \(T_{\mathrm{ad}}\) adaptive stages would depend
on quantities of the form \(
\sum_{t=0}^{T_{\mathrm{ad}}-1}
\frac{1}{z_{t+1}\sigma_{t+1}}
\|g-\widehat g_t\|_{L^2(q_t)}\). 
A full multi-step finite-sample analysis would additionally require
concentration bounds for the weighted mixture fitting step and stability of the
data-dependent surrogate refinement procedure, which is beyond the scope of this
work.
\end{remark}

\subsection{Surrogate-induced misclassification and estimator error}
\label{subsec:surrogate_estimator_error}

We now analyze the surrogate-based importance sampling estimator for a fixed
proposal \(q_t\) and surrogate \(\widehat g_t\). The result should be understood
as a fixed-proposal error decomposition, rather than a convergence guarantee for
an arbitrary adaptive proposal. The proposal quality enters through the
finite-sample term, which depends on the \(\chi^2\) divergence between the
surrogate-induced zero-variance density and \(q_t\); this quantity may be large,
or even infinite, if \(q_t\) does not adequately cover the relevant failure-biased
region. When the adaptive procedure stops, the result is applied with
\(t=t_{\mathrm f}\), \(q_t=q_{\mathrm f}\), and
\(\widehat g_t=\widehat g_{\mathrm f}\), thereby decomposing the final
surrogate-based estimation error into surrogate-induced misclassification bias
and finite-sample sampling error.

At iteration \(t\), define the true and surrogate-induced failure indicators by $I(\mathbf u)
    =
    \mathbb I_{\{g(\mathbf u)\le 0\}},
    \widehat I_t(\mathbf u)
    =
    \mathbb I_{\{\widehat g_t(\mathbf u)\le 0\}}.$
The corresponding misclassification set is
\[
    \mathcal M_t
    =
    \{\mathbf u:I(\mathbf u)\ne \widehat I_t(\mathbf u)\}.
\]
Since failure is determined by the sign of the performance function, a
misclassification can occur only when the surrogate error changes the sign of
\(g\). Hence,
\[
    \mathcal M_t
    \subseteq
    \left\{
    \mathbf u:
    |g(\mathbf u)|
    \le
    |g(\mathbf u)-\widehat g_t(\mathbf u)|
    \right\}.
\]

We impose the following standard margin condition near the failure boundary under
the current proposal distribution.

\begin{assump}[Margin condition under the proposal]
\label{ass:margin_condition}
There exist constants \(C_m>0\), \(\kappa>0\), and \(\tau_0>0\) such that, for all
\(0<\tau\le \tau_0\),
\begin{equation}
    q_t\bigl(|g(\mathbf u)|\le \tau\bigr)
    \le
    C_m\tau^\kappa .
    \label{eq:margin_condition}
\end{equation}
\end{assump}

This condition controls the amount of proposal probability mass near the failure
boundary. Under this condition, the surrogate-induced classification error can
be bounded by the local surrogate approximation error under \(q_t\).

\begin{lem}[Surrogate-induced event misclassification]
\label{lem:surrogate_misclassification}
Suppose Assumption~\ref{ass:margin_condition} holds. Then there exists a
constant \(C>0\), depending only on \(C_m\) and \(\kappa\), such that
\begin{equation}
    q_t(\mathcal M_t)
    \le
    C
    \|g-\widehat g_t\|_{L^2(q_t)}^{\frac{2\kappa}{\kappa+2}} .
    \label{eq:misclassification_bound}
\end{equation}
\end{lem}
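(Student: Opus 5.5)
The approach is the standard margin-plus-Chebyshev splitting. Write \(e=\|g-\widehat g_t\|_{L^2(q_t)}\) and fix a threshold \(\tau\in(0,\tau_0]\), to be chosen later. We use the inclusion \(\mathcal M_t\subseteq\{|g|\le|g-\widehat g_t|\}\) established just before Assumption~\ref{ass:margin_condition}. A point in \(\mathcal M_t\) then either satisfies \(|g(\mathbf u)|\le\tau\), or satisfies \(|g(\mathbf u)-\widehat g_t(\mathbf u)|\ge|g(\mathbf u)|>\tau\). Hence
\[
q_t(\mathcal M_t)\le q_t\bigl(|g|\le\tau\bigr)+q_t\bigl(|g-\widehat g_t|>\tau\bigr).
\]

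We bound the two terms separately. The first term is at most \(C_m\tau^\kappa\) by the margin condition \eqref{eq:margin_condition}. For the second term, Chebyshev's (Markov's) inequality in \(L^2(q_t)\) gives \(q_t(|g-\widehat g_t|>\tau)\le e^2/\tau^2\). Together,
\[
q_t(\mathcal M_t)\le C_m\tau^\kappa+e^2\tau^{-2}.
\]

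Next we balance the two terms by choosing \(\tau=e^{2/(\kappa+2)}\). Then \(\tau^\kappa=e^{2\kappa/(\kappa+2)}\) and \(e^2\tau^{-2}=e^{2-4/(\kappa+2)}=e^{2\kappa/(\kappa+2)}\). This yields the bound with \(C=C_m+1\).

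The main obstacle is the constraint \(\tau\le\tau_0\), which holds only when \(e\le\tau_0^{(\kappa+2)/2}\). We handle the complementary case \(e>\tau_0^{(\kappa+2)/2}\) using the trivial bound \(q_t(\mathcal M_t)\le 1\le \tau_0^{-\kappa}e^{2\kappa/(\kappa+2)}\). Taking \(C=\max\{C_m+1,\tau_0^{-\kappa}\}\) covers both cases. The constant therefore also depends on \(\tau_0\). One can either absorb this into the statement's ``depending only on \(C_m\) and \(\kappa\)'' as an implicit dependence, or note that the bound is intended in the small-error regime \(e\le\tau_0^{(\kappa+2)/2}\), where \(C=C_m+1\) indeed depends only on \(C_m\).

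Measurability and the case \(e=0\) need only brief comments. If \(e=0\), then \(\widehat g_t=g\) \(q_t\)-a.e., so \(\mathcal M_t\) is \(q_t\)-null and the bound holds trivially.
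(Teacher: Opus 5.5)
Your proof is correct and is the same argument as the paper's: the split $\mathcal M_t\subseteq\{|g|\le\tau\}\cup\{|g-\widehat g_t|>\tau\}$, the margin condition plus Markov's inequality, and balancing at $\tau=\|g-\widehat g_t\|_{L^2(q_t)}^{2/(\kappa+2)}$. Your handling of the constraint $\tau\le\tau_0$ is more careful than the paper's, which optimizes over $\tau$ without checking it. The resulting dependence of $C$ on $\tau_0$ cannot be removed: take $g\equiv 2\tau_0$ and $\widehat g_t\equiv-\tau_0$, so the margin condition holds for every $C_m$, yet $q_t(\mathcal M_t)=1$ while $\|g-\widehat g_t\|_{L^2(q_t)}=3\tau_0$. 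Your $\tau_0$-dependent (or small-error) reading of the constant is therefore the correct one.
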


Lemma~\ref{lem:surrogate_misclassification} shows that the relevant
classification error is controlled by the local surrogate error under the
proposal distribution \(q_t\), rather than by a global surrogate error over the
nominal distribution.

We next connect this classification error to the finite-sample importance
sampling estimator. For a fixed adaptive iteration \(t\), define
\[
    \widehat P_{N,t}^{\mathrm{surr}}
    =
    \frac{1}{N}
    \sum_{i=1}^{N}
    \widehat I_t(\mathbf u_i)
    \frac{p(\mathbf u_i)}{q_t(\mathbf u_i)},
    \qquad
    \mathbf u_i\sim q_t .
\]
This estimator targets the surrogate-induced failure probability
\[
    P_{\widehat{\mathcal F},t}
    =
    \mathbb{E}_{q_t}
    \left[
    \widehat I_t(\mathbf u)
    \frac{p(\mathbf u)}{q_t(\mathbf u)}
    \right].
\]
Define the corresponding surrogate-induced zero-variance density by $\widehat q_t^\star(\mathbf u)
    =
    \frac{
    \widehat I_t(\mathbf u)p(\mathbf u)
    }{
    P_{\widehat{\mathcal F},t}
    }$.

\begin{theo}[Finite-sample surrogate error decomposition]
\label{thm:finite_sample_surrogate_error}
Suppose Assumption~\ref{ass:margin_condition} holds. Assume further that the
likelihood ratio is bounded on the relevant region, namely
\begin{equation}
    \left\|
    \frac{p}{q_t}
    \right\|_{\infty}
    \le
    C_w^{(\infty)} .
    \label{eq:bounded_likelihood_ratio}
\end{equation}
Assume \(P_{\widehat{\mathcal F},t}>0\) and
\(\chi^2(\widehat q_t^\star\|q_t)<\infty\). Then there exists a constant
\(C>0\), depending on \(C_m\), \(\kappa\), and \(C_w^{(\infty)}\), such that
\begin{equation}
    \mathbb{E}
    \left[
    \left|
    \widehat P_{N,t}^{\mathrm{surr}}
    -
    P_{\mathcal F}
    \right|
    \right]
    \le
    C
    \|g-\widehat g_t\|_{L^2(q_t)}^{\frac{2\kappa}{\kappa+2}}
    +
    \frac{
    P_{\widehat{\mathcal F},t}
    }{
    \sqrt{N}
    }
    \sqrt{
    \chi^2(\widehat q_t^\star\|q_t)
    } .
    \label{eq:finite_sample_chi_square_bound}
\end{equation}
Equivalently, since
\[
    P_{\widehat{\mathcal F},t}
    \le
    P_{\mathcal F}
    +
    \left|P_{\widehat{\mathcal F},t}-P_{\mathcal F}\right|,
\]
and the surrogate-induced probability error is bounded by the first term in
\eqref{eq:finite_sample_chi_square_bound}, the sampling term can be further
controlled by
\begin{equation}
    \frac{
    P_{\widehat{\mathcal F},t}
    }{
    \sqrt{N}
    }
    \sqrt{\chi^2(\widehat q_t^\star\|q_t)}
    \le
    \frac{
    P_{\mathcal F}
    +
    C
    \|g-\widehat g_t\|_{L^2(q_t)}^{\frac{2\kappa}{\kappa+2}}
    }{
    \sqrt{N}
    }
    \sqrt{\chi^2(\widehat q_t^\star\|q_t)} .
    \label{eq:finite_sample_non_circular_bound}
\end{equation}
This form makes explicit that the bound depends on the true failure probability
and the surrogate-induced probability error, rather than treating
\(P_{\widehat{\mathcal F},t}\) as an independent quantity.
\end{theo}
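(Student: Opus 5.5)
The plan is to split the error with the triangle inequality at the surrogate-induced failure probability:
\[
\mathbb{E}\bigl|\widehat P_{N,t}^{\mathrm{surr}}-P_{\mathcal F}\bigr|
\le
\bigl|P_{\widehat{\mathcal F},t}-P_{\mathcal F}\bigr|
+
\mathbb{E}\bigl|\widehat P_{N,t}^{\mathrm{surr}}-P_{\widehat{\mathcal F},t}\bigr| .
\]
The first term is a deterministic bias. The second is a pure sampling error for an unbiased importance sampling estimator of $P_{\widehat{\mathcal F},t}$, since $\mathbb{E}_{q_t}[\widehat P_{N,t}^{\mathrm{surr}}]=P_{\widehat{\mathcal F},t}$ by construction.

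For the bias term, write $P_{\mathcal F}=\mathbb{E}_{q_t}[I\,p/q_t]$, which is valid wherever $q_t$ covers the support of $Ip$. Then
\[
\bigl|P_{\widehat{\mathcal F},t}-P_{\mathcal F}\bigr|
\le \mathbb{E}_{q_t}\Bigl[|\widehat I_t-I|\,\tfrac{p}{q_t}\Bigr]
\le C_w^{(\infty)}\, q_t(\mathcal M_t),
\]
using $|\widehat I_t-I|=\mathbb I_{\mathcal M_t}$ and the bounded likelihood ratio \eqref{eq:bounded_likelihood_ratio}. Lemma~\ref{lem:surrogate_misclassification} then gives the bound $C_w^{(\infty)}C\|g-\widehat g_t\|_{L^2(q_t)}^{2\kappa/(\kappa+2)}$. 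Absorbing $C_w^{(\infty)}$ into the constant produces the first term of \eqref{eq:finite_sample_chi_square_bound}, with a constant depending on $C_m$, $\kappa$ and $C_w^{(\infty)}$.

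I take the lemma as given. If one wanted to recheck it, the argument would be as follows. On $\mathcal M_t$ we have $|g|\le|g-\widehat g_t|$. For a threshold $\tau$, either $|g|\le\tau$, with mass at most $C_m\tau^\kappa$, or $|g-\widehat g_t|>\tau$, with mass at most $\varepsilon^2/\tau^2$ by Chebyshev, where $\varepsilon=\|g-\widehat g_t\|_{L^2(q_t)}$. Choosing $\tau=\varepsilon^{2/(\kappa+2)}$ gives the rate $\varepsilon^{2\kappa/(\kappa+2)}$. When this $\tau$ exceeds $\tau_0$, the trivial bound $q_t(\mathcal M_t)\le1\le(\varepsilon^{2/(\kappa+2)}/\tau_0)^{\kappa}$ covers the case.

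For the sampling term, apply Jensen's inequality, $\mathbb{E}|X|\le\sqrt{\operatorname{Var}X}$. The samples are i.i.d., so
\[
\operatorname{Var}\bigl(\widehat P_{N,t}^{\mathrm{surr}}\bigr)
=\frac1N\Bigl(\mathbb{E}_{q_t}\bigl[\widehat I_t\,p^2/q_t^2\bigr]-P_{\widehat{\mathcal F},t}^2\Bigr).
\]
Substituting $\widehat I_t p=P_{\widehat{\mathcal F},t}\,\widehat q_t^\star$ gives
\[
\mathbb{E}_{q_t}\bigl[\widehat I_t\,p^2/q_t^2\bigr]
=P_{\widehat{\mathcal F},t}^2\int(\widehat q_t^\star)^2/q_t ,
\]
so the variance equals $P_{\widehat{\mathcal F},t}^2\,\chi^2(\widehat q_t^\star\|q_t)/N$. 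This quantity is finite by assumption, and its square root is the second term of \eqref{eq:finite_sample_chi_square_bound}.

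The non-circular form \eqref{eq:finite_sample_non_circular_bound} follows by inserting $P_{\widehat{\mathcal F},t}\le P_{\mathcal F}+|P_{\widehat{\mathcal F},t}-P_{\mathcal F}|$ and reusing the bias bound. I expect no step to be a real obstacle. The one delicate point is the bias step: representing $P_{\mathcal F}$ under $q_t$ requires absolute continuity of $Ip$ with respect to $q_t$, which is implicit in the finiteness of $\|p/q_t\|_\infty$. I will state this explicitly.
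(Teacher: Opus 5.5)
Your proposal is correct and follows the paper's proof step for step: the same triangle-inequality split at $P_{\widehat{\mathcal F},t}$, the bias bound by $C_w^{(\infty)}q_t(\mathcal M_t)$ combined with Lemma~\ref{lem:surrogate_misclassification}, Jensen's inequality with the per-sample variance identity $P_{\widehat{\mathcal F},t}^2\,\chi^2(\widehat q_t^\star\|q_t)$, and the same substitution for the non-circular form. Your side treatment of the case $\|g-\widehat g_t\|_{L^2(q_t)}^{2/(\kappa+2)}>\tau_0$ in the lemma is more careful than the paper's ``optimize over $\tau$'' step, but note that it makes the constant depend on $\tau_0$ as well, not only on $C_m$, $\kappa$ and $C_w^{(\infty)}$. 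This dependence is genuinely unavoidable.
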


Theorem~\ref{thm:finite_sample_surrogate_error} decomposes the final
surrogate-based estimation error into two parts. The first term is the
surrogate-induced bias caused by event misclassification. The second term is the
finite-sample sampling error under the proposal \(q_t\), expressed through the
\(\chi^2\) divergence between the surrogate-induced zero-variance density
\(\widehat q_t^\star\) and the proposal \(q_t\). Thus, accurate estimation
requires both a locally accurate surrogate near the failure boundary and a
proposal distribution that covers the surrogate-induced failure region.

\section{Numerical experiments}
\label{sec:numerical_experiments}

In this section, we evaluate the proposed surrogate-assisted adaptive importance
sampling framework on several rare-event simulation problems. The experiments
assess both estimation accuracy and high-fidelity evaluation cost, with emphasis
on high-dimensional settings where global surrogate construction is difficult.
\subsection{Experimental setup}
\label{subsec:experimental_setup}
For all numerical examples, we compare the proposed PGGR-ICE-vMFNM method
with the following baseline methods:
\begin{itemize}
    \item \textbf{Crude Monte Carlo (CMC):} used to provide the reference estimate
    $P_{\mathcal F}^{\rm ref}$ when such a reference is computationally feasible.

   \item \textbf{ICE-vMFNM:} the true-model adaptive importance sampling
baseline, which uses the same proposal family and adaptive proposal update as
the proposed method, but evaluates the true performance function \(g(\mathbf u)\)
throughout the adaptive stage.

\item \textbf{Random-ICE-vMFNM:} a random-refinement baseline used in selected
examples. It uses the same surrogate architecture, proposal update, and
comparable high-fidelity budget as PGGR-ICE-vMFNM, but replaces the greedy
refinement rule with uniform random selection from the candidate pool.
\end{itemize}

Here PGGR stands for Proposal-Guided Greedy Refinement. In all comparisons,
CMC or a high-budget ICE-vMFNM run is used as the accuracy reference, while
true-model ICE-vMFNM serves as the computational baseline.

Unless otherwise specified, the proposed method uses the following default
configuration. The initial surrogate is trained using $M_0=512$ high-fidelity
samples drawn from the nominal density $p$, with $N_{\rm pre}=40{,}000$
pretraining iterations. At each adaptive refinement iteration, a candidate pool
of size $N_c=10^4$ is generated from the current proposal distribution, and
$m_{\rm add}=70$ samples are selected for high-fidelity evaluation. The greedy
selection parameter is set to $\beta=0.5$, and the surrogate is fine-tuned for
$N_{\rm ft}=500$ iterations after each enrichment step. For the two-dimensional
multimodal example, a smaller initial design and a simpler surrogate are used,
as specified in Section~\ref{subsec:four_mode}.

The surrogate consists of an encoder and a prediction network. In the
high-dimensional examples, the encoder has layer widths $[d,40,10]$, mapping the
$d$ dimensional input to a 10-dimensional latent representation, and the
prediction network has layer widths $[10,20,20,1]$. During adaptive refinement,
the last encoder layer is kept fixed to stabilize the latent-space metric used
in the greedy selection rule. The regularization term in Eq.~\eqref{loss_function}
is chosen as $L^2$ regularization, and its weight is determined by the smoothed
gradient-balancing rule with regularization ratio $0.05$.

The proposal distribution is updated using the ICE-vMFNM procedure described in
Section~\ref{sec:surrogate_assisted_proposal_refinement}. In all experiments, the
ICE target coefficient of variation and stopping tolerance are set to
$\delta_{\rm target}=2$ and $\delta_{\rm stop}=2$, respectively. The number of
vMFNM mixture components $K$ is chosen according to the number of dominant
failure modes in each example, and the same $K$ is used for PGGR-ICE-vMFNM and
the corresponding baselines.

Each experiment is repeated $N_{\rm rep}=50$ times. Let $\widehat P_{\mathcal F}$
denote the failure-probability estimate from one independent run. We report the
coefficient of variation and the relative error,
\begin{equation}
\label{eq:error_metrics}
\delta[\widehat P_{\mathcal F}]
=
\frac{\sqrt{\mathbb V[\widehat P_{\mathcal F}]}}
{\mathbb E[\widehat P_{\mathcal F}]},
\qquad
\varepsilon[\widehat P_{\mathcal F}]
=
\frac{
\left|
P_{\mathcal F}^{\rm ref}
-
\mathbb E[\widehat P_{\mathcal F}]
\right|}
{P_{\mathcal F}^{\rm ref}} .
\end{equation}
In practice, the expectation and variance are approximated by the sample mean
and sample variance over the $N_{\rm rep}$ repeated runs.

We also report $N_g$, the average number of actual evaluations of the true
performance function $g(\mathbf u)$ per run. For the proposed method, the
initial surrogate is reused across repeated trials, so the initial training cost
is amortized. If $K_{\rm ad}^{(r)}$ denotes the number of adaptive refinement
iterations in the $r$th run and $K_{\rm ad}
=
\frac{1}{N_{\rm rep}}
\sum_{r=1}^{N_{\rm rep}} K_{\rm ad}^{(r)}$, 
then the reported high-fidelity cost is
\begin{equation}
\label{eq:Ng_cost}
N_g
=
\frac{M_0}{N_{\rm rep}}
+
m_{\rm add}K_{\rm ad}.
\end{equation}
For a single independent run, the unamortized cost is
$N_g^{\rm single}=M_0+m_{\rm add}K_{\rm ad}$.

\subsection{Four mode problem}
\label{subsec:four_mode}

We first consider a two-dimensional benchmark with four failure modes. The
performance function is defined as
\begin{equation*}
g(\mathbf{u})
=
\min
\left\{
\begin{aligned}
&0.1\,(u_1-u_2)^2 - \frac{u_1+u_2}{\sqrt{2}} + 5,\\
&0.1\,(u_1-u_2)^2 + \frac{u_1+u_2}{\sqrt{2}} + 5,\\
&(u_1-u_2) + \frac{7}{\sqrt{2}} + 2,\\
&(u_2-u_1) + \frac{7}{\sqrt{2}} + 2.
\end{aligned}
\right.
\end{equation*}

This performance function induces four separated failure modes, and we therefore
use $K=4$ vMFNM mixture components. The two-dimensional setting allows us to
visualize the adaptive proposal, selected high-fidelity samples, and surrogate
failure boundary, providing a clear illustration of the proposed greedy
enrichment strategy.


For this low-dimensional example, we use a slightly different configuration from
the default setting. The surrogate is a multilayer perceptron with layer widths
\([2,20,20,1]\), and the initial training set contains \(M_0=32\) high-fidelity
samples. At each adaptive refinement iteration, \(m_{\mathrm{add}}=30\) samples
are selected from a candidate pool of size \(3000\). We set \(\beta=1\) to
encourage stronger diversity across the spatially separated failure modes. During
the vMFNM proposal-fitting step, we run 10 parallel EM chains with different
initializations to reduce the risk of mode loss; this does not introduce
additional high-fidelity evaluations. After each enrichment step, the surrogate
is retrained for \(2000\) optimization epochs.

To illustrate the effect of the enrichment strategy, Fig.~\ref{fig:proposal_evolution}
compares greedy and random refinement under comparable evaluation budgets. The
first row corresponds to greedy enrichment and the second row to random
enrichment. Each panel shows the proposal samples, newly selected high-fidelity
samples, accumulated training samples, true failure boundary
\(g(\mathbf u)=0\), and surrogate boundary \(\widehat g_t(\mathbf u)=0\).

\begin{figure}[htbp]
    \centering
    \vspace{0.3cm}

    \begin{overpic}[width=0.30\textwidth]{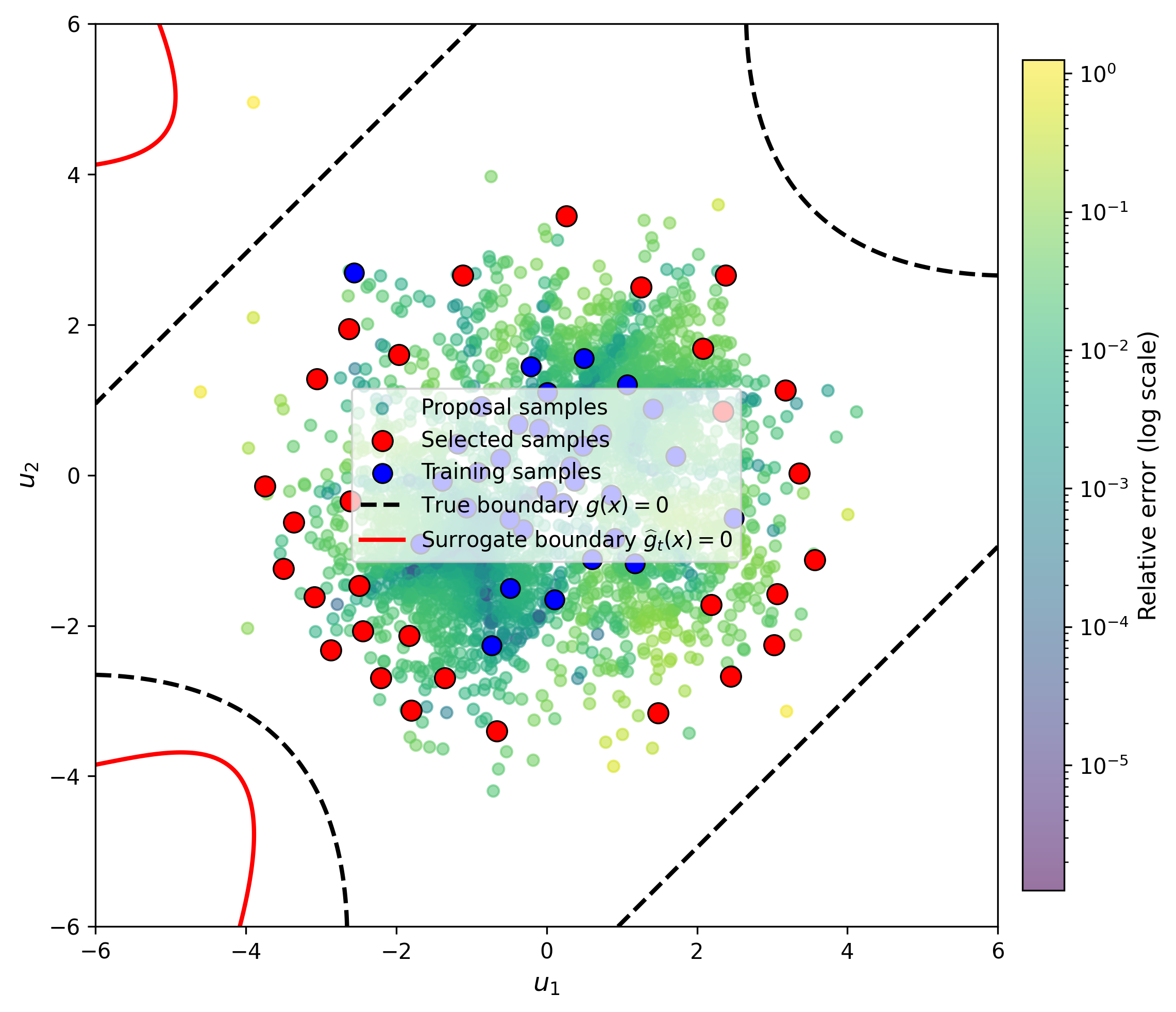}
        \put(25,87){\small\textbf{Iteration 1}}
        \put(-5,43){\makebox(0,0){\rotatebox[origin=c]{90}{\small\textbf{Greedy}}}}
    \end{overpic}
    \begin{overpic}[width=0.30\textwidth]{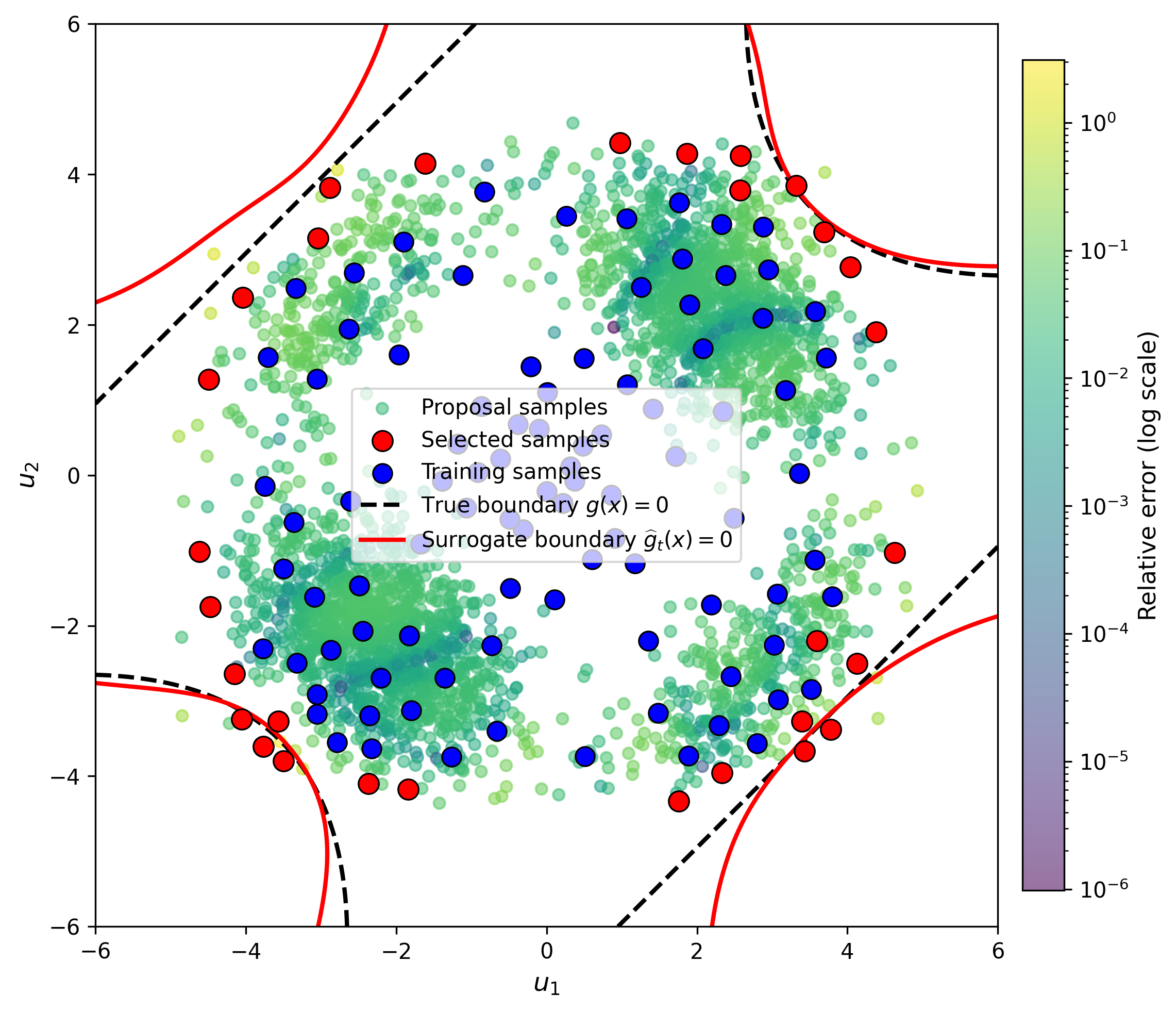}
        \put(25,87){\small\textbf{Iteration 3}}
    \end{overpic}
    \begin{overpic}[width=0.30\textwidth]{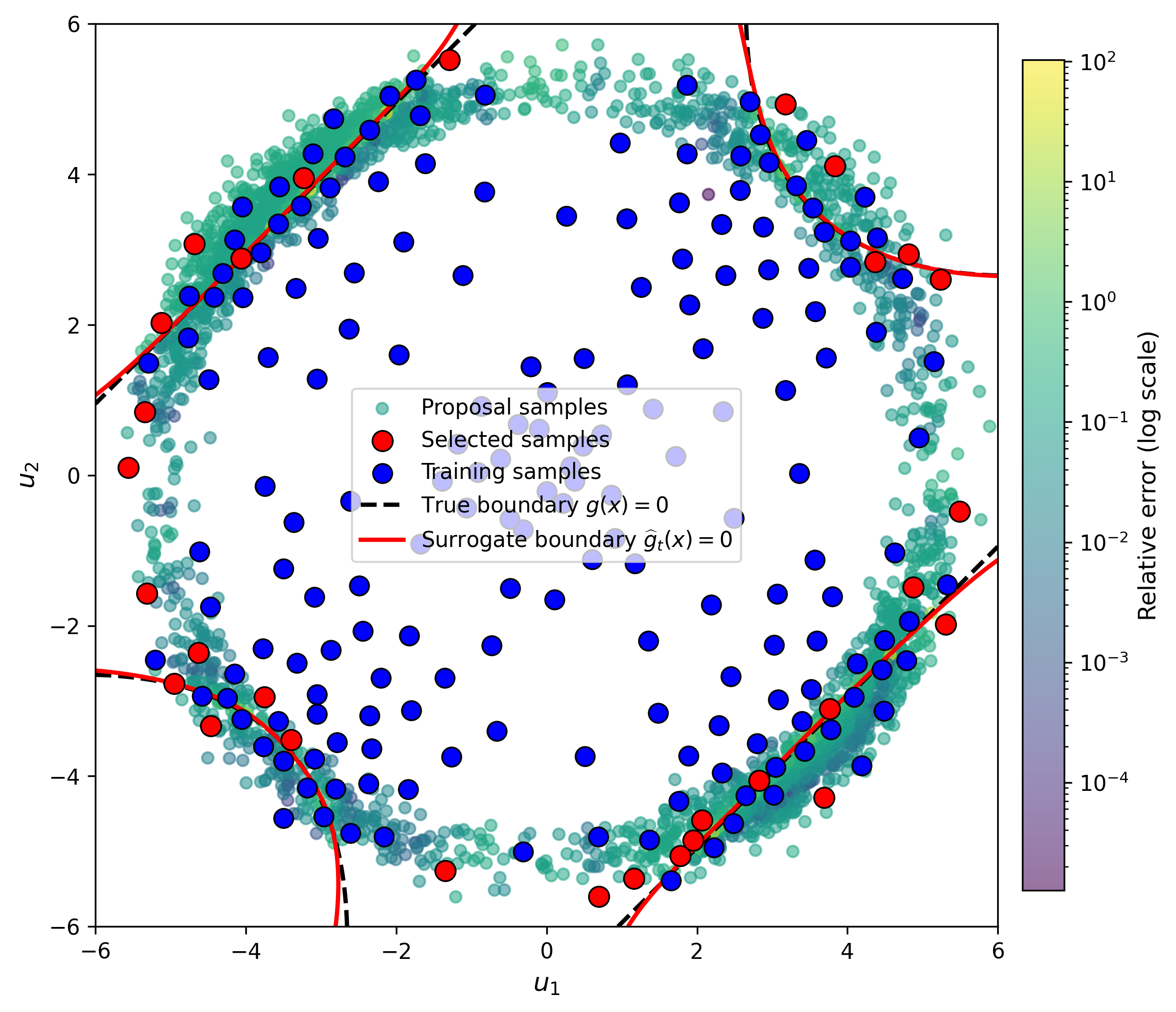}
        \put(25,87){\small\textbf{Iteration 6}}
    \end{overpic}

    \vspace{0.35cm}

    \begin{overpic}[width=0.30\textwidth]{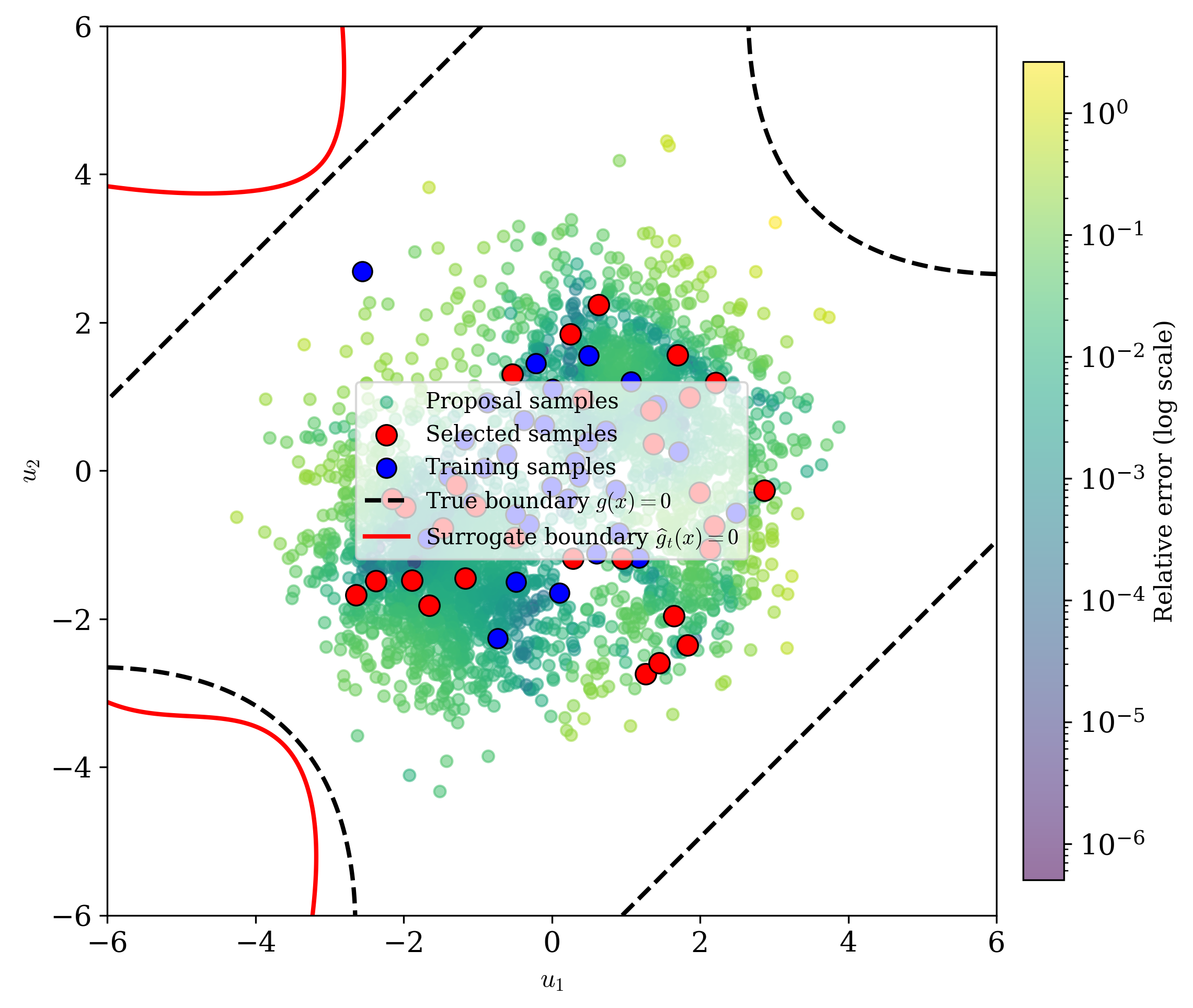}
        \put(-5,43){\makebox(0,0){\rotatebox[origin=c]{90}{\small\textbf{Random}}}}
    \end{overpic}
    \begin{overpic}[width=0.30\textwidth]{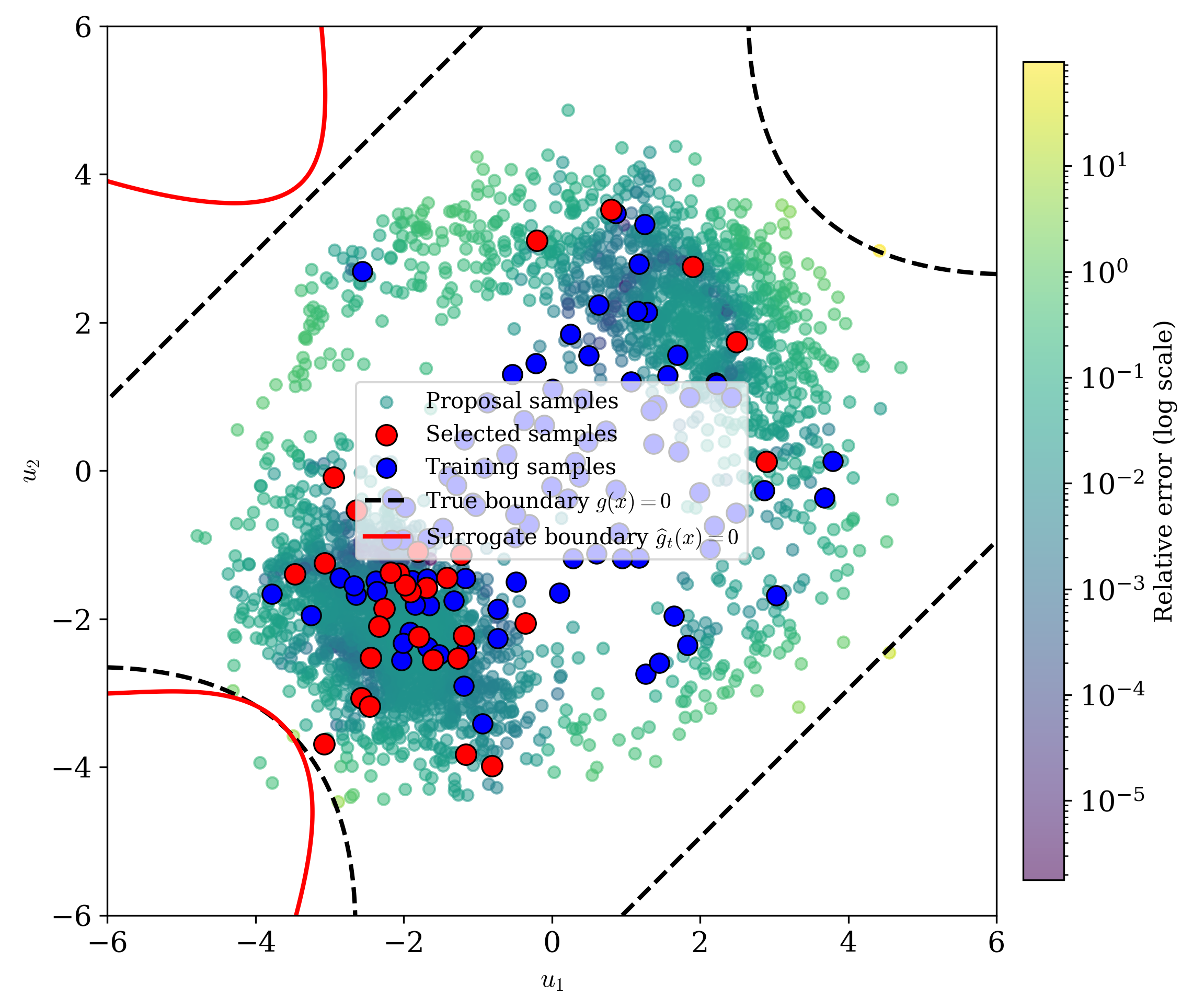}
    \end{overpic}
    \begin{overpic}[width=0.30\textwidth]{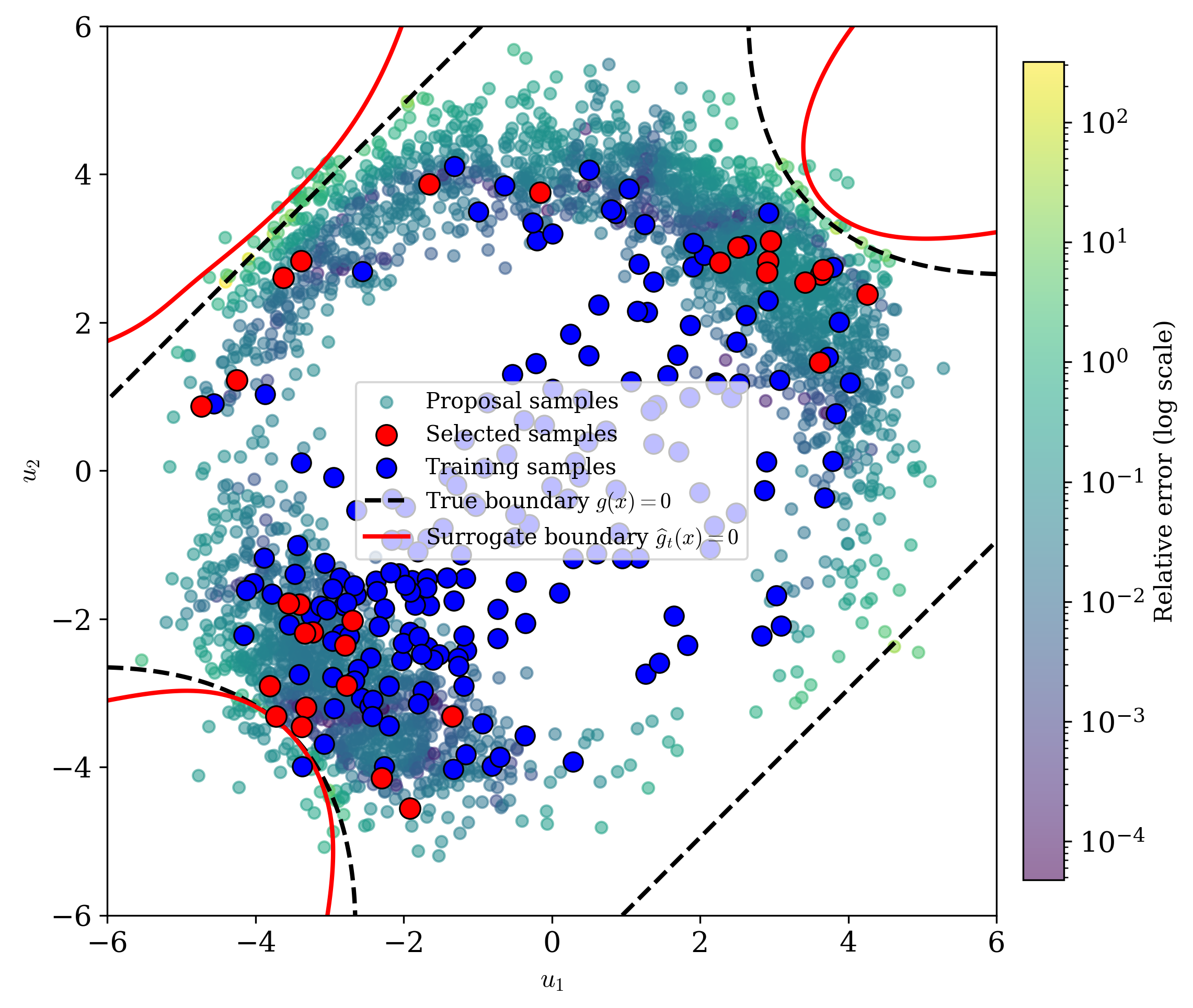}
    \end{overpic}

    \caption{Evolution of the adaptive proposal samples, selected high-fidelity
    samples, accumulated training samples, and surrogate failure boundaries for
    the two-dimensional multimodal example. The first row uses the proposed
    greedy enrichment strategy, while the second row uses random enrichment.}
    \label{fig:proposal_evolution}
\end{figure}

The difference between the two enrichment strategies is evident from
Fig.~\ref{fig:proposal_evolution}. Greedy enrichment selects samples near the
evolving failure boundary while maintaining coverage of different failure modes,
leading to more targeted surrogate correction. In contrast, random enrichment
often places samples away from informative boundary regions, so some
failure-boundary components are not refined effectively. This supports the use
of a boundary-aware and diversity-promoting enrichment strategy.

\begin{table}[htbp]
\centering
\caption{Comparison of different methods for the two-dimensional multimodal
example. Here \(N_g\) denotes the average number of actual evaluations of the
performance function \(g(\mathbf u)\) per run. The crude Monte Carlo result is
used as the reference value \(P_{\mathcal F}^{\mathrm{ref}}\).}
\label{tab:comparison_2d_multimodal}
\begin{tabular}{lcccc}
\toprule
Method 
& \(N_g\)
& \(\overline P_{\mathcal F}\)
& \(\varepsilon_{\mathrm{rel}}\)
& \(\delta\) \\
\midrule
CMC 
& \(1.0\times 10^{9}\)
& \(1.21\times 10^{-6}\)
& -- 
& -- \\

ICE-vMFNM
& \(1.1\times 10^4\)
& \(1.20\times 10^{-6}\)
& \(0.008\)
& \(0.048\) \\

PGGR-ICE-vMFNM
& \(1.9\times 10^{2}\)
& \(1.24\times 10^{-6}\)
& \(0.024\)
& \(0.052\) \\

Random-ICE-vMFNM
& \(2.1\times 10^{2}\)
& \(7.60\times 10^{-7}\)
& \(0.387\)
& \(0.244\) \\
\bottomrule
\end{tabular}
\end{table}

Table~\ref{tab:comparison_2d_multimodal} further quantifies the effect of the
proposed greedy enrichment strategy. The true-model ICE-vMFNM method gives an
estimate close to the crude Monte Carlo reference, with relative error \(0.008\)
and coefficient of variation \(0.048\), but requires \(1.1\times 10^4\)
evaluations of the performance function. In comparison, PGGR-ICE-vMFNM achieves
a comparable estimate, with relative error \(0.024\) and coefficient of
variation \(0.052\), while using only \(1.9\times 10^2\) high-fidelity
evaluations on average.

The random enrichment baseline uses a similar number of high-fidelity
evaluations but is much less accurate, with relative error \(0.387\) and
coefficient of variation \(0.244\). This is consistent with
Fig.~\ref{fig:proposal_evolution}, where random samples are less concentrated
near informative failure-boundary regions. These results show that the accuracy
of PGGR-ICE-vMFNM comes not merely from adding high-fidelity samples, but from
selecting informative and diverse samples along the evolving proposal
distribution.

\subsection{High-dimensional stochastic diffusion problem}
\label{subsec:stochastic_diffusion_example}

We next consider a high-dimensional rare-event problem governed by a
one-dimensional diffusion equation with a stochastic diffusion coefficient
\cite{doi:10.1137/21M1404119}. Let \(D=(0,1)\). For almost every \(\omega\in\Omega\), we seek the weak solution
\(y(\cdot,\omega)\in V\), where $V=\{v\in H^1(D): v(0)=0\}$,
such that
\begin{equation}
    \int_D
    a(x,\omega)
    \frac{\partial y(x,\omega)}{\partial x}
    \frac{\partial v(x)}{\partial x}
    \,dx
    =
    \int_D v(x)\,dx,
    \qquad
    \forall v\in V.
    \label{eq:diffusion_weak_form}
\end{equation}
This weak formulation corresponds to the boundary conditions $y(0,\omega)=0,
    \frac{\partial y}{\partial x}(1,\omega)=0$,
where the homogeneous Neumann condition at \(x=1\) is imposed naturally.
The diffusion coefficient is modeled as a log-normal random field,
\(a(x,\omega)=\exp(Z(x,\omega))\). The random field is specified such that
\(\mathbb{E}[a(x,\cdot)]=1\) and \(\mathrm{Std}[a(x,\cdot)]=0.1\).
Accordingly, the mean and variance of the underlying Gaussian field \(Z\) are
given by $\mu_Z
    =
    \log\bigl(\mathbb{E}[a(x,\cdot)]\bigr)
    -
    \sigma_Z^2/2, 
    \;
    \sigma_Z^2
    =
    \log
    \left(
    1+
    \mathrm{Std}[a(x,\cdot)]^2/
    \mathbb{E}[a(x,\cdot)]^2
    \right)$. 
The covariance function of \(Z\) is chosen as $c(x,y)
    =
    \sigma_Z^2
    \exp
    \left(
    -
    |x-y|/\lambda
    \right),
    \lambda = 0.01$. 
To obtain a finite-dimensional parameterization, we use the truncated
Karhunen--Lo\`eve expansion
    $Z_d(x,\omega) = \mu_Z
    +
    \sigma_Z
    \sum_{m=1}^{d}
    \sqrt{\nu_m}\,
    \theta_m(x)
    U_m(\omega)$, 
where \(\{(\nu_m,\theta_m)\}_{m=1}^d\) are the eigenpairs associated with the
covariance operator, and $U_1,\ldots,U_d
    \sim
    \mathcal{N}(0,1)$ 
are independent standard normal random variables. The truncated coefficient is
then defined as $a_d(x,\omega)
    =
    \exp(Z_d(x,\omega))$. 
Following the reference setting, we choose \(d=100\), which captures
approximately \(81\%\) of the variability of \(\log a\).

The weak problem is discretized by continuous piecewise linear finite elements
on a uniform mesh with mesh size \(h=1/512\). Let \(y_h\) denote the corresponding
finite element solution. The failure event is defined by the solution value at
the right endpoint. Specifically, after the finite-dimensional KL
parameterization, failure occurs when $y_h(1;\mathbf u)>0.535$. Representative solution sample paths are plotted in Fig.~\ref{fig:poisson_solution}.
Thus, the performance function is defined as
\begin{equation}
    g(\mathbf u)
    =
    0.535-y_h(1;\mathbf u),
    \qquad
    \mathbf u=(U_1,\ldots,U_d)\in\mathbb R^d.
    \label{eq:diffusion_lsf}
\end{equation}

\begin{figure}[htbp]
    \centering
    \includegraphics[width=0.45\linewidth]{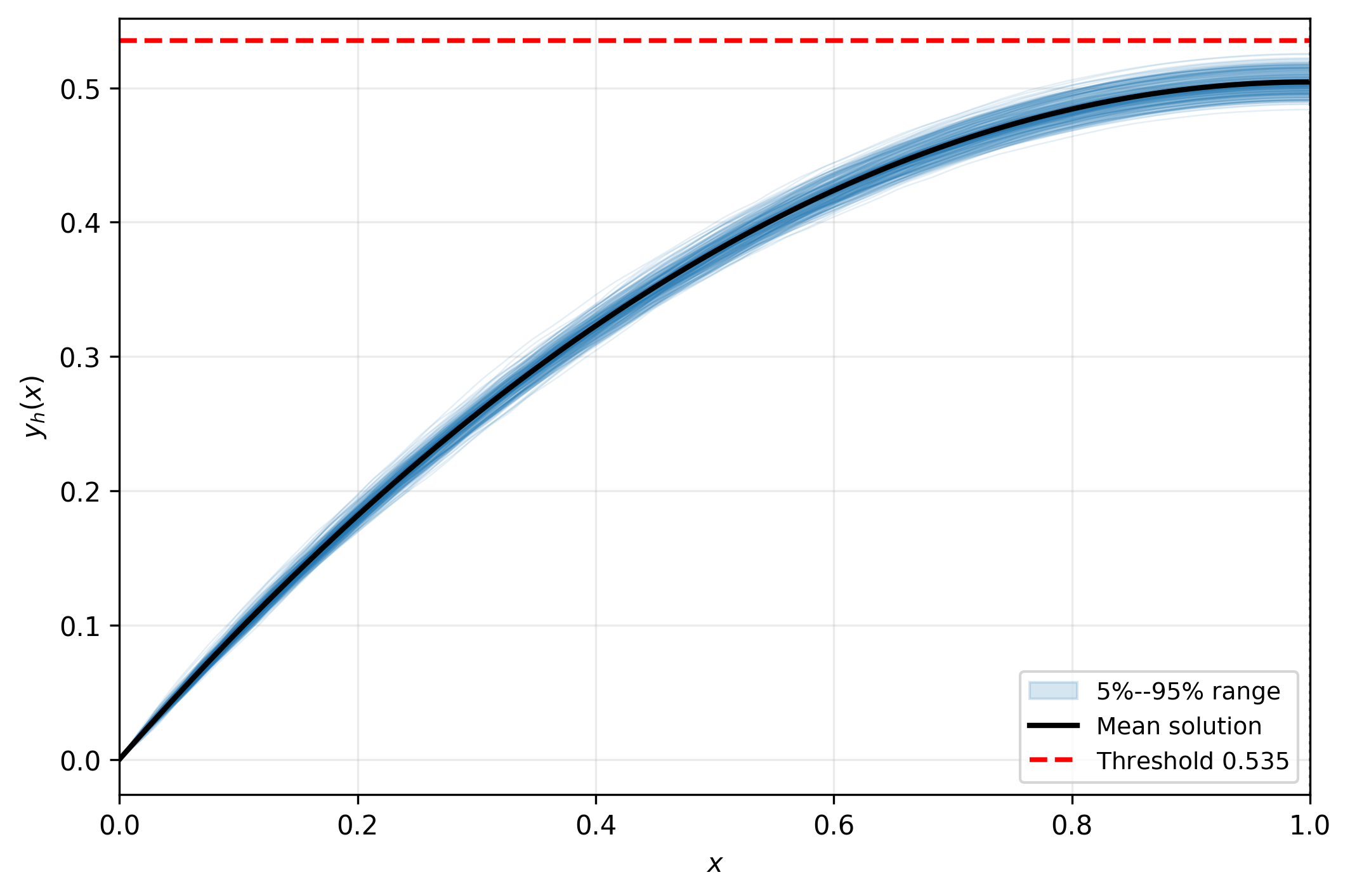}
    \vspace{-0.2cm}
    \caption{Sample paths of the finite element solution for the stochastic
    diffusion problem. The shaded region indicates the \(5\%\)--\(95\%\) range,
    the black curve denotes the mean solution, and the red dashed line marks the
    failure threshold \(0.535\).}
    \label{fig:poisson_solution}
\end{figure}

This presents a 100-dimensional PDE-driven rare-event problem in which each
evaluation of \(g(\mathbf u)\) requires a finite element solve. 

\begin{figure}[htbp]
    \centering

    \begin{overpic}[width=0.295\textwidth]{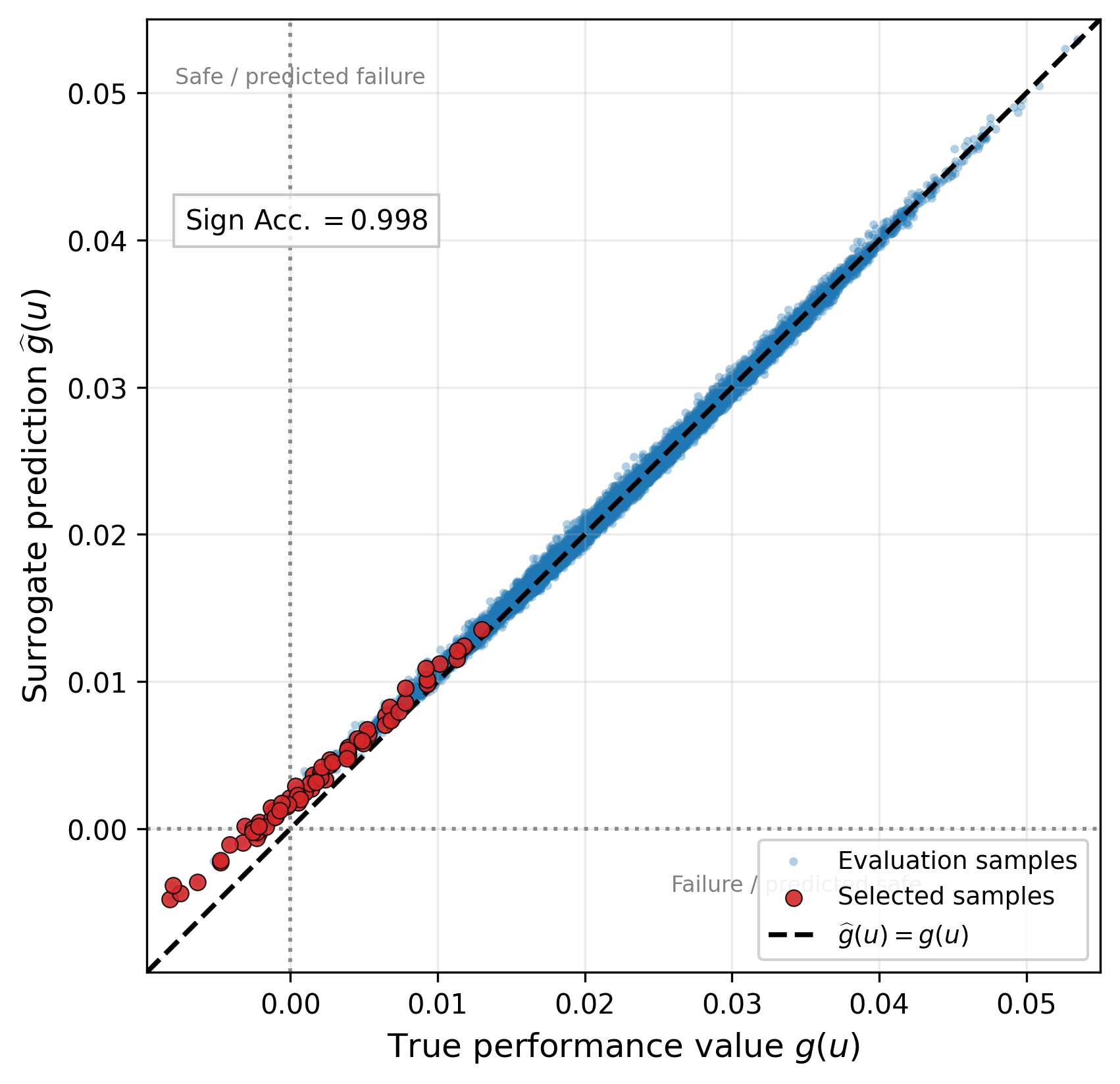}
        \put(37,98){\small Iteration 1}
    \end{overpic}
    \begin{overpic}[width=0.30\textwidth]{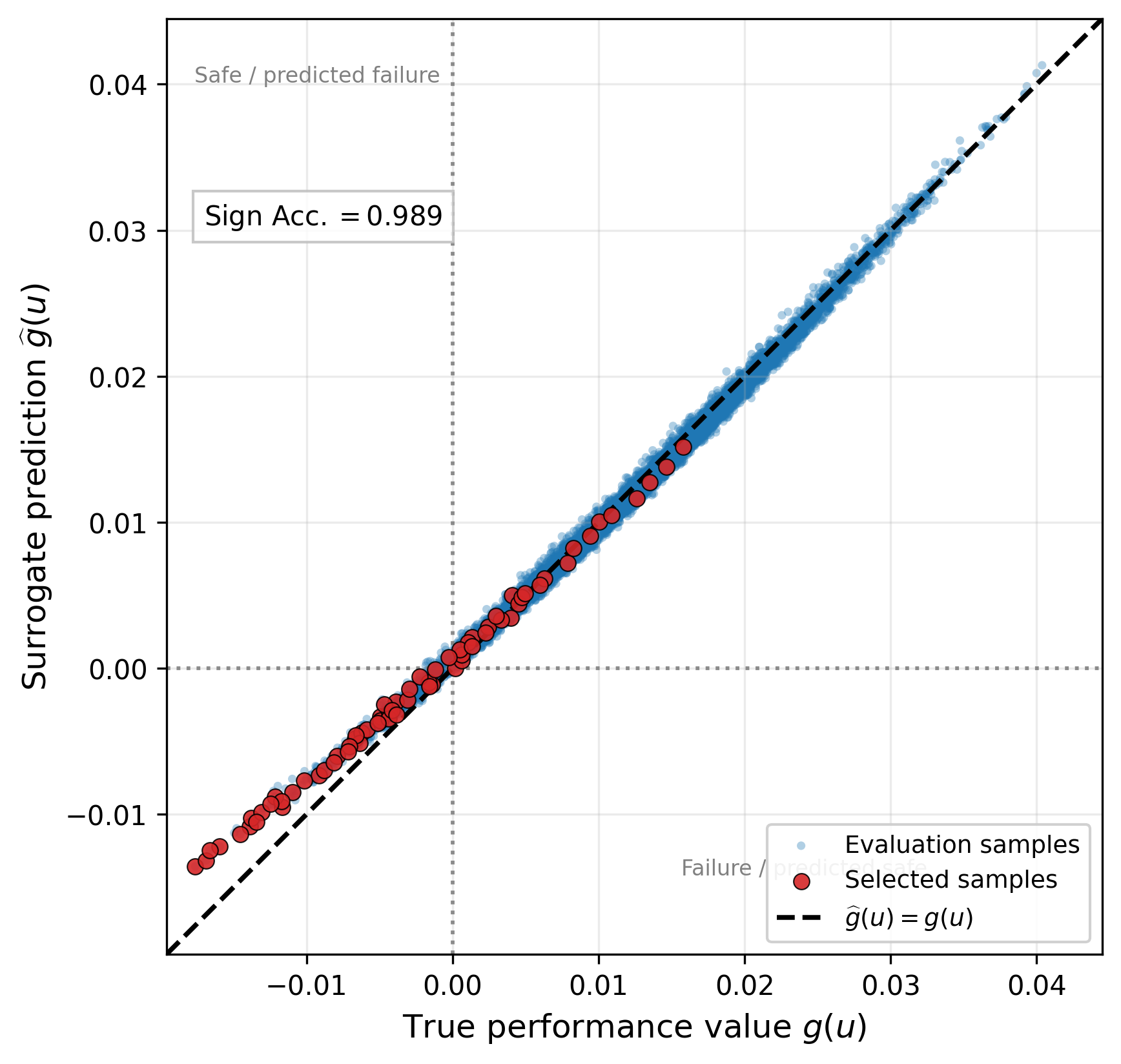}
        \put(37,96){\small Iteration 2}
    \end{overpic}
    \begin{overpic}[width=0.30\textwidth]{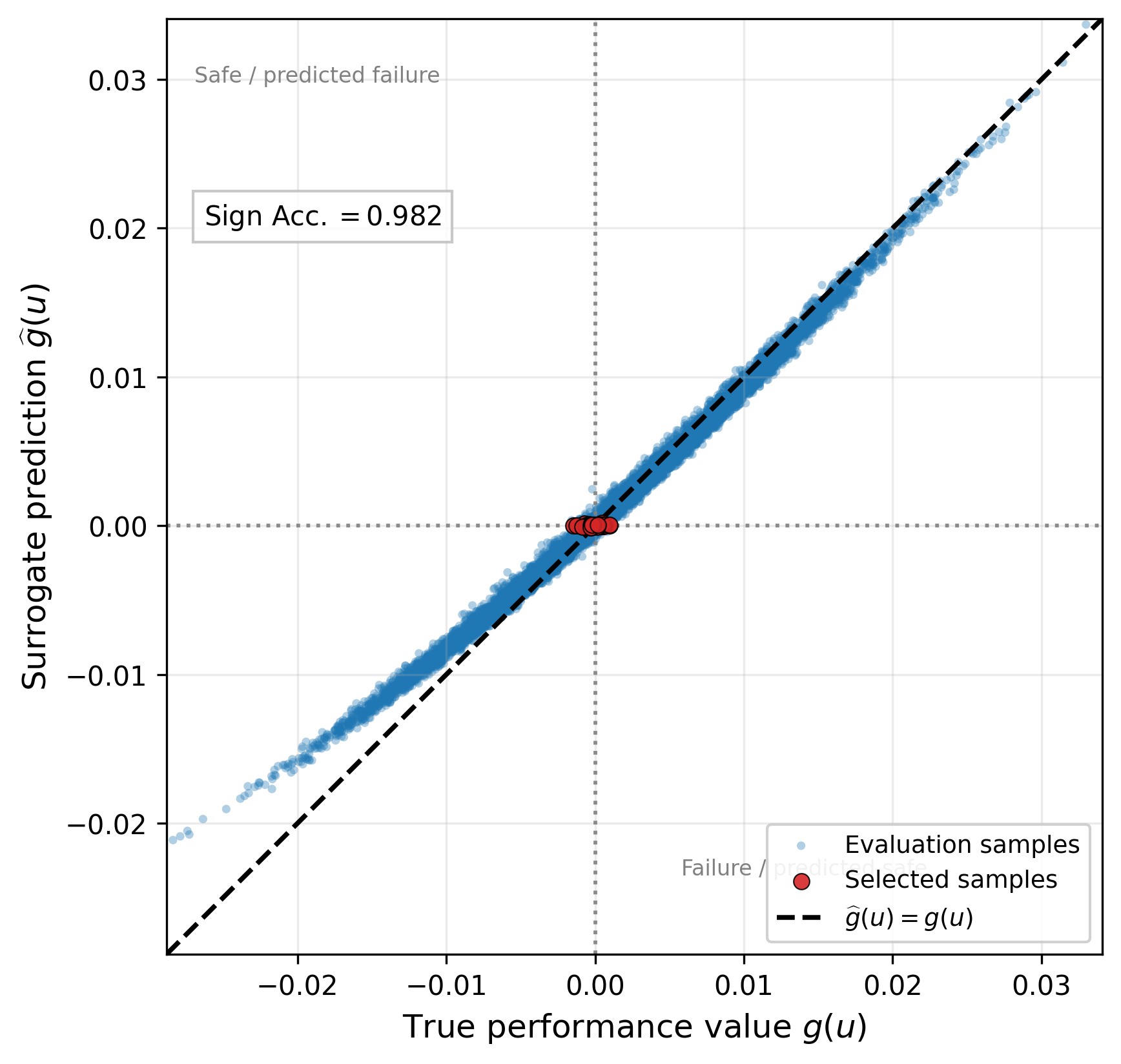}
        \put(37,96){\small Iteration 3}
    \end{overpic}

    \vspace{0.2em}

    \begin{overpic}[width=0.30\textwidth]{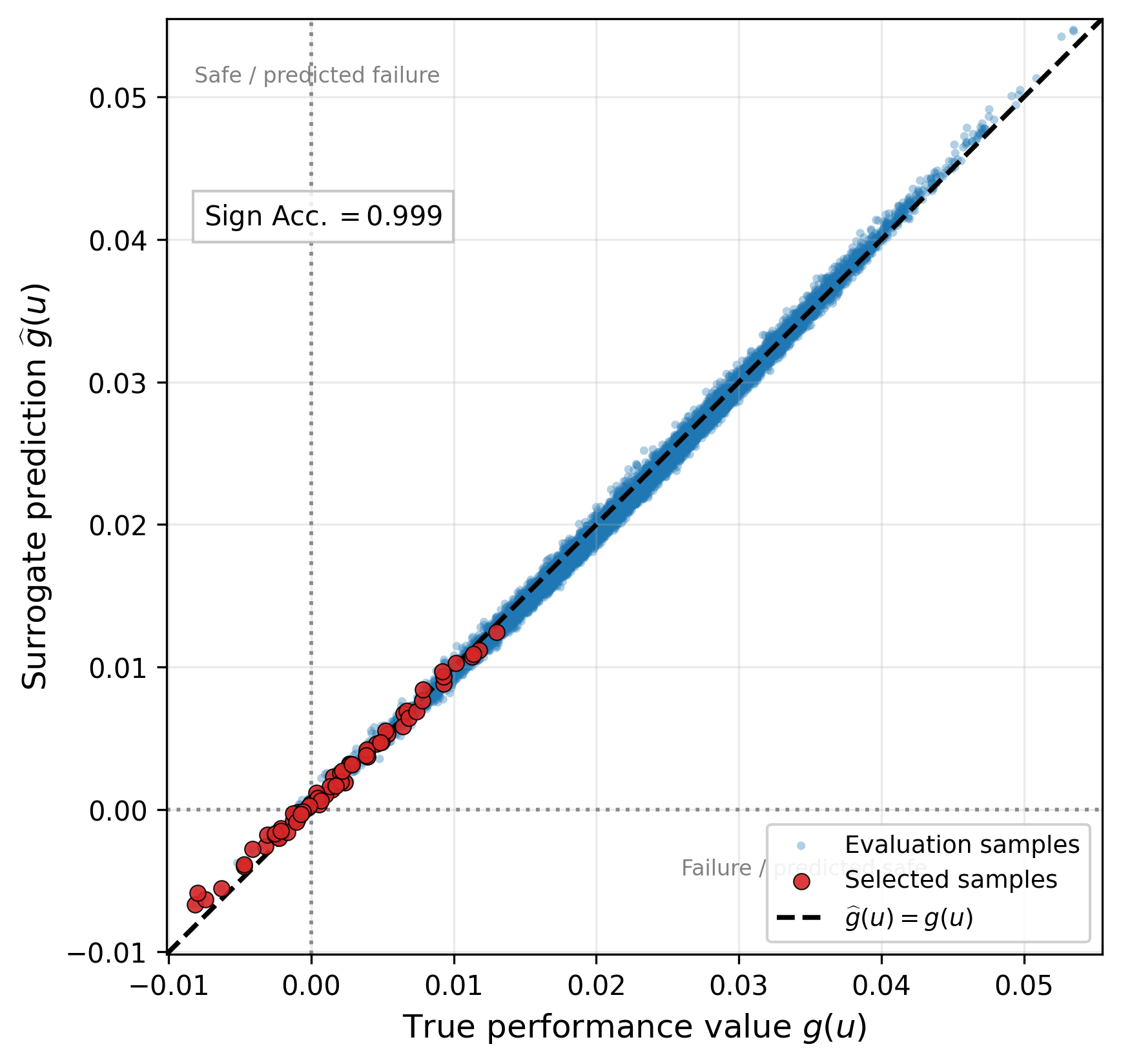}
    \end{overpic}
    \begin{overpic}[width=0.30\textwidth]{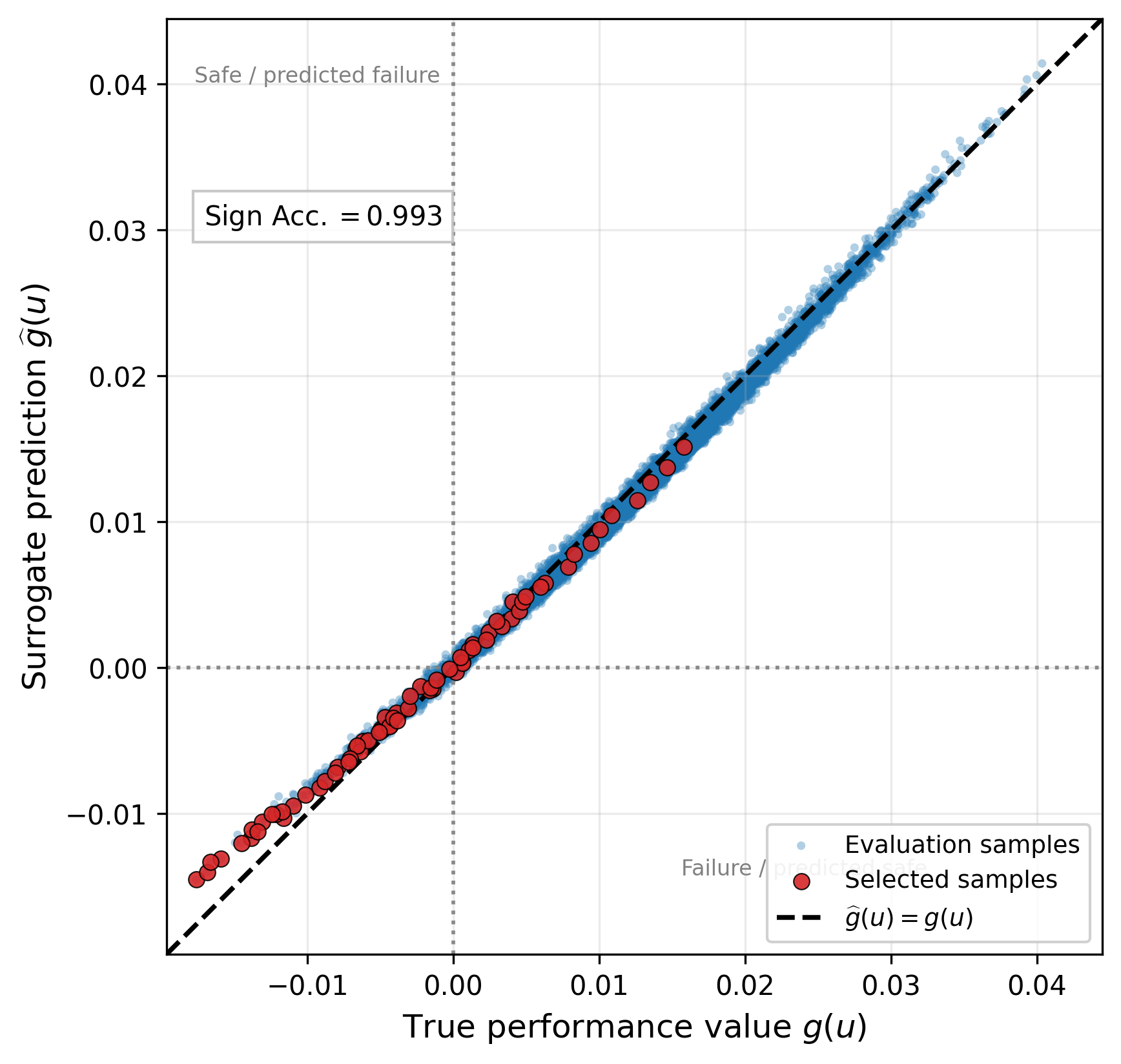}
    \end{overpic}
    \begin{overpic}[width=0.30\textwidth]{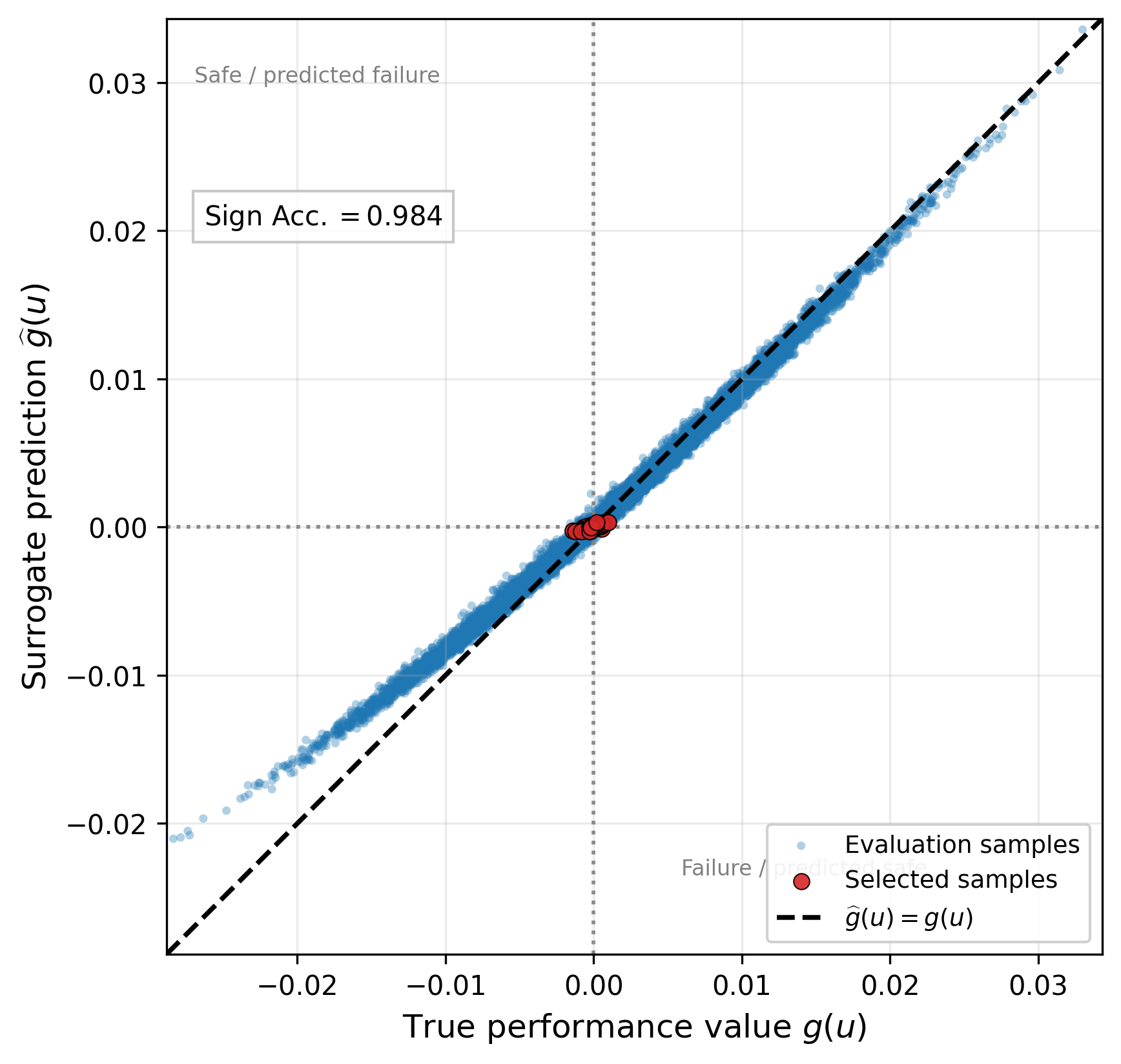}
    \end{overpic}

    \caption{Surrogate prediction quality before and after local refinement for
    the stochastic diffusion problem. The top and bottom rows show predictions
    before and after adding the selected high-fidelity samples, respectively.
    Each column corresponds to one adaptive iteration. The dashed line denotes
    \(\widehat g(\mathbf u)=g(\mathbf u)\), and the dotted lines indicate the
    decision boundaries \(g(\mathbf u)=0\) and \(\widehat g(\mathbf u)=0\).}
    \label{fig:diffusion_surrogate_refinement}
\end{figure}

Figure~\ref{fig:diffusion_surrogate_refinement} illustrates the effect of local
surrogate refinement. Since the failure boundary cannot be visualized directly
in 100 dimensions, we compare \(g(\mathbf u)\) and \(\widehat g(\mathbf u)\) on
samples drawn from the current proposal. The selected samples concentrate near
the decision boundary \(g(\mathbf u)=0\), where sign errors directly affect the
failure indicator. After refinement, the surrogate predictions become more
aligned with the true values in this boundary region, showing that the proposed
method improves the surrogate locally in the proposal-induced region relevant to
the ICE update.

\begin{table}[htbp]
\centering
\caption{Performance comparison for the 100-dimensional diffusion example.
The crude Monte Carlo estimate is used as the reference value
\(P_{\mathcal F}^{\mathrm{ref}}\). Here \(N_g\) denotes the average number of
actual evaluations of the performance function \(g(\mathbf u)\) per run.}
\label{tab:comparison_100_dimensional}
\begin{tabular}{lcccc}
\toprule
Method
& \(N_g\)
& \(\overline P_{\mathcal F}\)
& \(\varepsilon_{\mathrm{rel}}\)
& \(\delta\) \\
\midrule
CMC
& \(1.0\times 10^{7}\)
& \(1.39\times 10^{-4}\)
& --
& -- \\

ICE-vMFNM
& \(4.1\times 10^3\)
& \(1.43\times 10^{-4}\)
& \(0.029\)
& \(0.082\) \\

AK-MCS
& \(2.2\times 10^2\)
& \(0\)
& \(1.000\)
& -- \\

PGGR-ICE-vMFNM
& \(2.2\times 10^{2}\)
& \(1.35\times 10^{-4}\)
& \(0.029\)
& \(0.035\) \\

Random-ICE-vMFNM
& \(2.2\times 10^{2}\)
& \(1.09\times 10^{-4}\)
& \(0.222\)
& \(0.066\) \\
\bottomrule
\end{tabular}
\end{table}

Table~\ref{tab:comparison_100_dimensional} reports the results for the
100-dimensional diffusion example. We also include AK-MCS~\cite{echard2011ak} as
a classical Gaussian-process/Kriging-based reliability baseline with learning
function \(U(\mathbf u)=|\mu(\mathbf u)|/\sigma(\mathbf u)\). In our
implementation, the standard stopping criterion \(U_{\min}\ge 2\) is already
satisfied after the initial design with \(N_g=220\) evaluations. However, the
resulting Kriging surrogate classifies no samples in the Monte Carlo population
as failures, giving \(\overline P_{\mathcal F}=0\).
This indicates that a global Kriging surrogate built from the initial design
fails to identify the failure boundary in this high-dimensional rare-event
problem.

In contrast, PGGR-ICE-vMFNM gives
\(\overline P_{\mathcal F}=1.35\times 10^{-4}\), close to the CMC reference value
\(1.39\times 10^{-4}\), with relative error \(0.029\). It reduces the average
number of high-fidelity evaluations from \(4.1\times 10^3\) for true-model
ICE-vMFNM to \(2.2\times 10^2\), while Random-ICE-vMFNM uses a comparable budget
but substantially underestimates the failure probability. These results show
that the improvement comes from proposal-guided greedy refinement rather than
merely from adding more training data, and highlight the benefit of refining the
surrogate along the evolving proposal distribution.

\subsection{Semilinear heat equation with random heat source}
\label{subsec:semilinear_heat_example}

We next consider a one-dimensional semilinear heat equation with a random heat
source. This example is used to test the proposed method on a nonlinear
time-dependent PDE-driven rare-event problem.

For a given source term \(f(x)\), the temperature field \(y(t,x)\) satisfies
\begin{equation}
    \partial_t y(t,x)
    -
    \nu \partial_{xx} y(t,x)
    +
    \gamma y(t,x)^3
    =
    f(x),
    \qquad
    (t,x)\in (0,T]\times(0,1),
    \label{eq:semilinear_heat_pde}
\end{equation}
with zero Dirichlet boundary conditions and zero initial condition. 
In the numerical experiment, we set $T=1, \nu=0.02, \gamma=1$. We introduce uncertainty through the source term \(f\). Specifically, the source
term is modeled as a lognormal random field. Let $\mathbf u=(u_1,\ldots,u_d)^\top\sim \mathcal N(\mathbf 0,I_d)$. For each realization of \(\mathbf u\), the source field is defined by $f(x;\mathbf u)
    =
    f_0(x)
    \exp\left(
        \sigma_f Z_d(x;\mathbf u)
        -
        \frac{1}{2}\sigma_f^2
        \operatorname{Var}\bigl[Z_d(x;\mathbf u)\bigr]
    \right)$,
where the deterministic mean profile is $
    f_0(x)
    =
    5\exp\left(-80(x-0.5)^2\right)$.
The underlying Gaussian random field \(Z(x)\) is assumed to have zero mean and
exponential covariance kernel
\begin{equation}
    C(x,x')
    =
    \exp\left(
        -\frac{|x-x'|}{\ell}
    \right),
    \qquad \ell=0.1 .
    \label{eq:semilinear_heat_covariance}
\end{equation}
To obtain a finite-dimensional parameterization, we use the truncated
Karhunen--Lo\`eve expansion $Z_d(x;\mathbf u)
    =
    \sum_{j=1}^{d}
    \sqrt{\lambda_j}\phi_j(x)u_j$, 
where \(\{(\lambda_j,\phi_j)\}_{j=1}^{d}\) are the leading eigenpairs associated
with the covariance operator. In this experiment, we retain \(d=100\) KL modes,
so that the random source is parameterized by a 100-dimensional standard
Gaussian vector. We set \(\sigma_f=0.6\). For each realization of
\(\mathbf u\), the source \(f(x;\mathbf u)\) is substituted into
\eqref{eq:semilinear_heat_pde}, and the corresponding solution is denoted by
\(y(t,x;\mathbf u)\). The quantity of interest is the accumulated thermal exposure over the
observation region $D_{\mathrm{obs}}=(0.4,0.6)$. It is defined by
\begin{equation}
    Q(\mathbf u)
    =
    \frac{1}{T|D_{\mathrm{obs}}|}
    \int_0^T
    \int_{D_{\mathrm{obs}}}
    y(t,x;\mathbf u)\,dx\,dt .
    \label{eq:semilinear_heat_qoi}
\end{equation}
The rare event is defined as the event that the accumulated thermal exposure
exceeds the threshold \(z_{\mathrm{heat}}=2.4\). Equivalently, we define
the performance function $g(\mathbf u)
    =
    2.4 - Q(\mathbf u)$, 
so that the failure domain is $
    \Omega_{\mathcal F}
    =
    \{\mathbf u:g(\mathbf u)\le 0\}
    =
    \{\mathbf u:Q(\mathbf u)\ge 2.4\}$.

The PDE is discretized by a finite difference method with \(n_x=256\) interior
spatial grid points and \(n_t=200\) uniform time steps. We use a backward Euler
scheme in time and a centered finite difference discretization in space. At each
time step, the nonlinear algebraic system is solved by Newton's method with
tolerance \(10^{-8}\) and a maximum of 12 Newton iterations. A representative solution is plotted in Fig.~\ref{fig:semilinear_heat}. This example is challenging because the input dimension is \(d=100\),
the governing equation is nonlinear, and each evaluation of \(g(\mathbf u)\)
requires solving a time-dependent PDE.

\begin{figure}[htbp]
    \centering
    \includegraphics[width=0.5\linewidth]{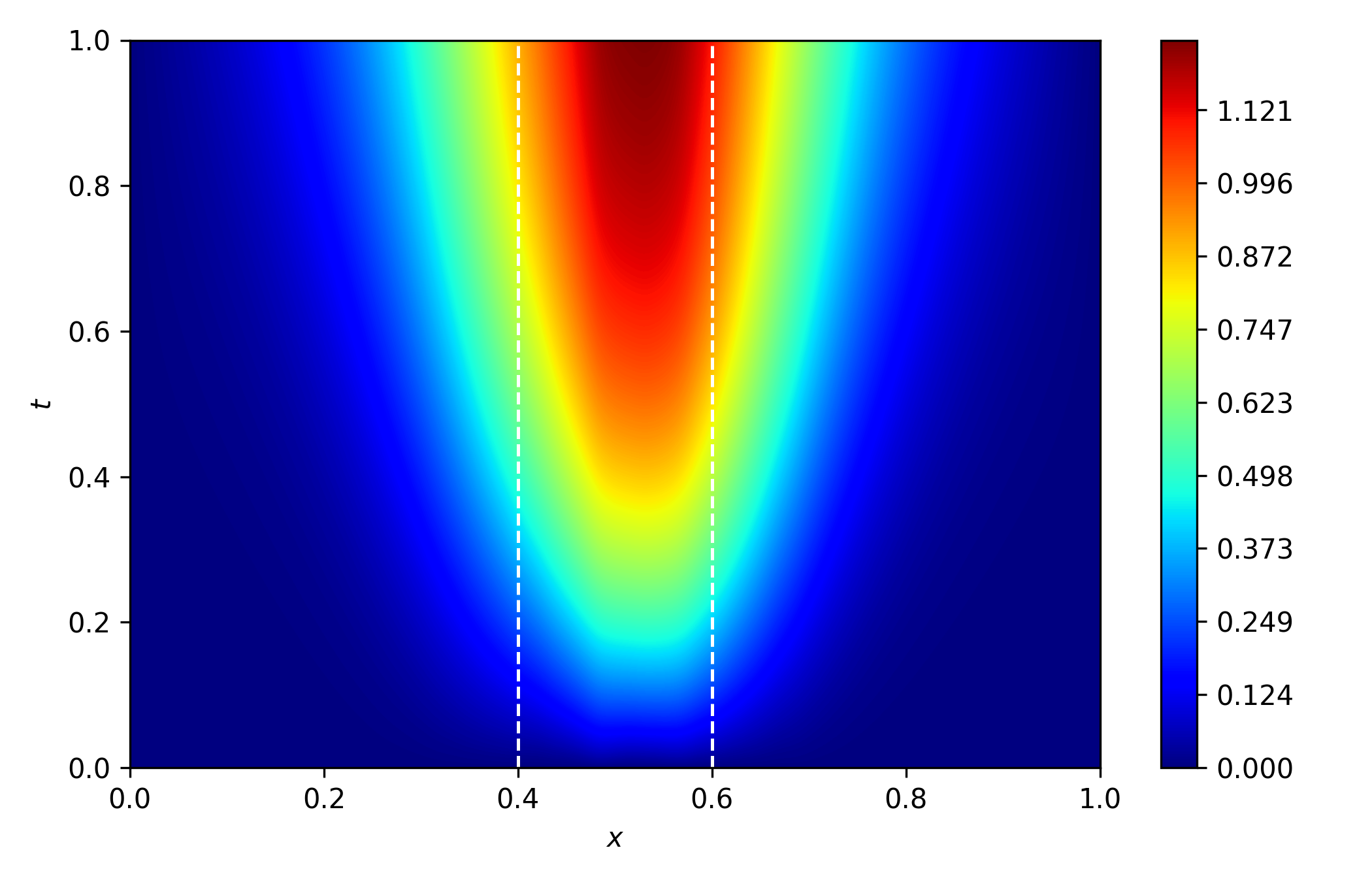}
    \vspace{-0.3cm}
    \caption{A representative solution of the semilinear heat equation with a
random heat source.}
    \label{fig:semilinear_heat}
\end{figure}

\begin{table}[htbp]
\centering
\caption{Performance comparison for the 100-dimensional stochastic semilinear
heat equation example. The reference value is obtained by the high-budget
ICE-vMFNM method.}
\label{tab:comparison_semilinear_heat}
\begin{tabular}{lcccc}
\toprule
Method 
& $N_g$
& $\overline P_{\mathcal F}$
& $\varepsilon_{\mathrm{rel}}$
& $\delta$ \\
\midrule
ICE-vMFNM (reference)  
& $1.3\times 10^{5}$
& $5.93\times 10^{-5}$
& -- 
& 0.014 \\

ICE-vMFNM
& $4.2\times 10^3$
& $6.00\times 10^{-5}$
& $0.012$
& $0.088$ \\

PGGR-ICE-vMFNM
& $2.2\times 10^{2}$
& $5.86\times 10^{-5}$
& $0.012$
& $0.108$ \\

Random-ICE-vMFNM 

& $2.9\times 10^{2}$
& $2.2\times 10^{-5}$
& $0.629$
& $0.267$ \\ 
\bottomrule
\end{tabular}
\end{table}

Table~\ref{tab:comparison_semilinear_heat} reports the results for the 100-dimensional
stochastic semilinear heat equation example. The high-budget ICE-vMFNM estimate
is used as the reference value, giving
$P_{\mathcal F}^{\rm ref}=5.93\times 10^{-5}$. The reduced-budget true-model
ICE-vMFNM method produces a close estimate,
$\widehat P_{\mathcal F}=6.00\times 10^{-5}$, with relative error
$\varepsilon_{\rm rel}=0.012$, but requires $4.2\times 10^3$ evaluations of the
performance function. In comparison, PGGR-ICE-vMFNM achieves essentially the
same relative error, also $\varepsilon_{\rm rel}=0.012$, using only
$2.2\times 10^2$ high-fidelity evaluations on average. This corresponds to an
approximately 19-fold reduction in true-model evaluations. Its coefficient of
variation is slightly larger than that of ICE-vMFNM, but remains reasonable
given the substantial reduction in computational cost.

The Random-ICE-vMFNM baseline uses a comparable number of high-fidelity
evaluations, but severely underestimates the failure probability, yielding
$\widehat P_{\mathcal F}=2.2\times 10^{-5}$ and
$\varepsilon_{\rm rel}=0.629$. Its coefficient of variation is also much larger,
with $\delta=0.267$. This comparison indicates that the improvement of
PGGR-ICE-vMFNM is not simply due to adding a small number of high-fidelity
samples, but to selecting informative samples through the proposal-guided greedy
refinement strategy. For this nonlinear time-dependent PDE example, the proposed
method therefore maintains an accurate surrogate in the failure-relevant region
while substantially reducing the number of expensive model evaluations.

\subsection{Heat conduction problem}

Finally, we consider a heat conduction problem adapted from
\cite{PAPAIOANNOU2019106564}. The computational domain is
$D=(-0.5,0.5)\,\mathrm{m}\times(-0.5,0.5)\,\mathrm{m}$, and the temperature field
$T(\mathbf{x})$ satisfies
\begin{equation}
    -\nabla\cdot(\kappa(\mathbf{x})\nabla T(\mathbf{x}))
    =
    I_A(\mathbf{x})Q,
    \qquad
    \mathbf{x}\in D,
    \label{eq:heat_equation}
\end{equation}
where $\kappa(\mathbf{x})$ is the thermal conductivity, $Q=2000\,\mathrm{W/m^2}$,
and $I_A$ is the indicator of the heat source region
$A=(0.2,0.3)\,\mathrm{m}\times(0.2,0.3)\,\mathrm{m}$. A zero Neumann condition is
imposed on the top boundary, while zero Dirichlet conditions are imposed on the
remaining boundaries, as shown in Fig.~\ref{fig:heat_conduction}.

The thermal conductivity is modeled as a lognormal random field, $\kappa(\mathbf{x})
    =
    \exp(a_\kappa+b_\kappa f(\mathbf{x}))$,
where $f(\mathbf{x})$ is a standard Gaussian random field with covariance
\begin{equation}
    k(\mathbf{x},\mathbf{x}')
    =
    \exp\left(
    -\frac{\|\mathbf{x}-\mathbf{x}'\|_2^2}{l^2}
    \right),
    \qquad l=0.2 .
    \label{eq:heat_covariance}
\end{equation}
The constants $a_\kappa$ and $b_\kappa$ are chosen so that the mean and standard
deviation of $\kappa(\mathbf{x})$ are
$\mu_\kappa=1\,\mathrm{W}/({}^{\circ}\mathrm{C}\,\mathrm{m})$ and
$\sigma_\kappa=0.3\,\mathrm{W}/({}^{\circ}\mathrm{C}\,\mathrm{m})$, respectively.

We use the EOLE method \cite{betz2014numerical} to obtain a finite-dimensional
representation of the Gaussian random field. Let
$\{\boldsymbol{\xi}_i\}_{i=1}^{n}$ be a set of predefined grid points. The
Gaussian random field is approximated by $\widehat f(\mathbf{x})
    =
    \sum_{i=1}^{M}
    \frac{U_i}{\sqrt{l_i}}\,
    \boldsymbol{\phi}_i^{\top}
    \mathbf{C}_{\mathbf{x}\boldsymbol{\xi}}$,
 where $U_i$ are independent standard normal variables,
$\mathbf{C}_{\mathbf{x}\boldsymbol{\xi}}$ is the covariance vector with entries
$\left(\mathbf{C}_{\mathbf{x}\boldsymbol{\xi}}\right)_j
=
k(\mathbf{x},\boldsymbol{\xi}_j)$, and
$(l_i,\boldsymbol{\phi}_i)$ are the eigenvalue--eigenvector pairs of the
covariance matrix $\mathbf{C}_{\boldsymbol{\xi}\boldsymbol{\xi}}$. With grid
spacing $0.1\,\mathrm{m}$, we obtain $n=121$ grid points and retain $M=100$ EOLE
terms, which gives more than $99\%$ accuracy in the random field approximation.

The performance function is defined by the spatial average of the temperature
over the target region
$B=(-0.3,-0.2)\,\mathrm{m}\times(-0.3,-0.2)\,\mathrm{m}$:
\begin{equation}
    g(\mathbf{u})
    =
    8.5
    -
    \frac{1}{|B|}
    \int_{B}
    T(\mathbf{x};\mathbf{u})\,d\mathbf{x},
    \label{eq:heat_performance_function}
\end{equation}
where $\mathbf{u}=(U_1,\ldots,U_M)\in\mathbb{R}^{M}$. Failure occurs when
$g(\mathbf{u})\le 0$, that is, when the average temperature over $B$ exceeds
$8.5$. The forward problem is solved by the finite element method using $25040$
linear triangular elements; see Fig.~\ref{fig:heat_conduction}. A representative
input-output pair is shown in Fig.~\ref{fig:heat_solution}.

\begin{figure}[htbp]
    \centering
    \begin{overpic}[width=0.45\textwidth]{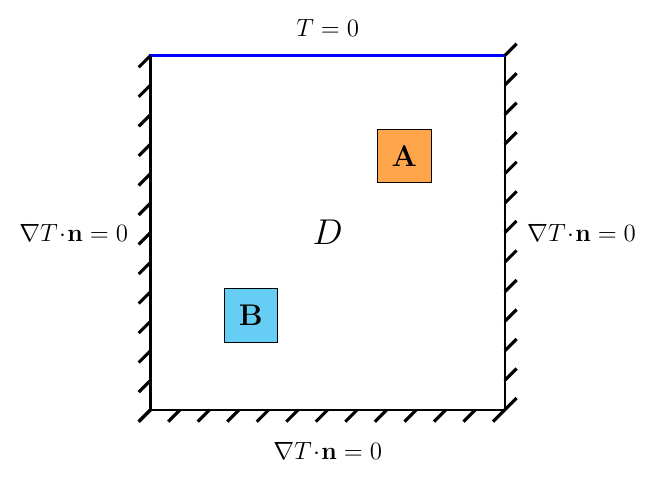}
    \end{overpic}
    \raisebox{.35cm}{
    \begin{overpic}[width=0.29\textwidth]{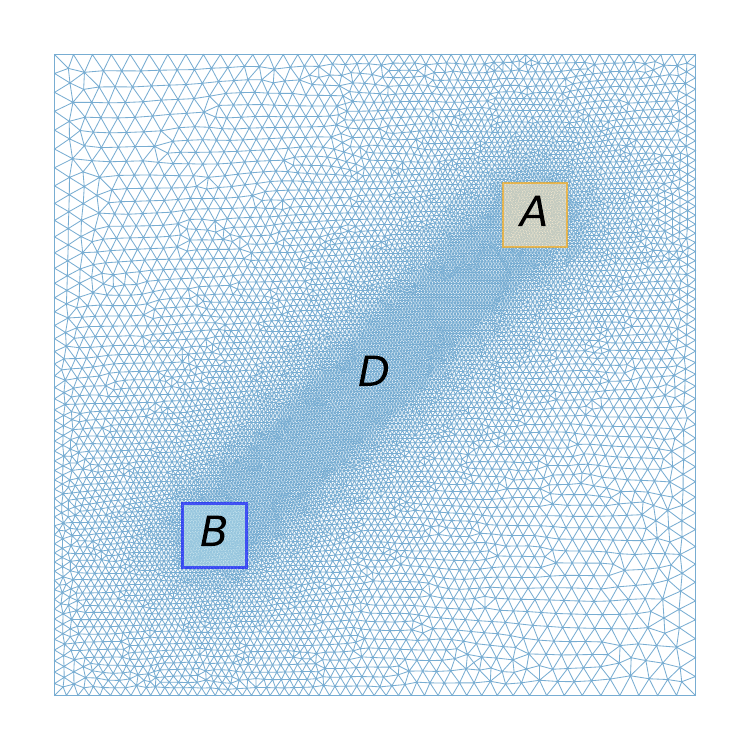}
    \end{overpic}
    }
    \vspace{-0.2cm}
    \caption{Heat conduction problem: computational domain and boundary
    conditions (left), and finite element mesh (right).}
    \label{fig:heat_conduction}
\end{figure}

\begin{figure}[htbp]
    \centering
    \includegraphics[width=0.75\linewidth]{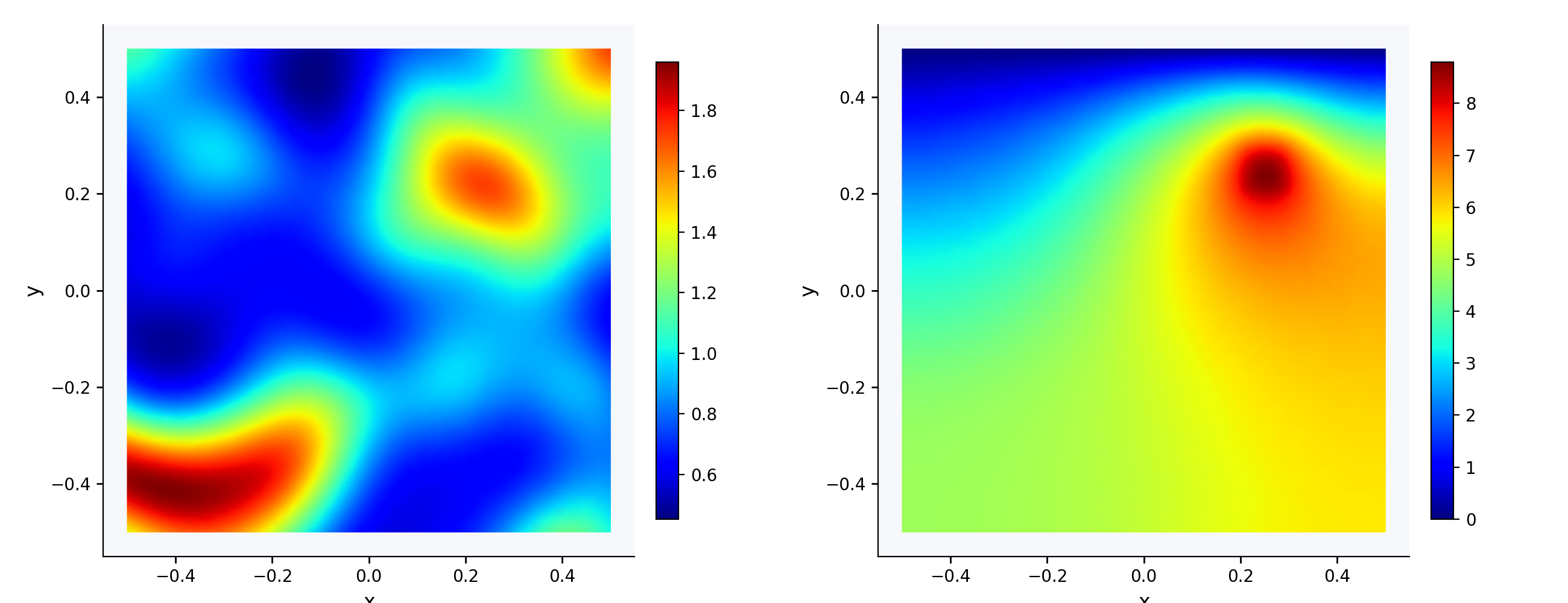}
    \vspace{-0.2cm}
    \caption{Representative input and output fields for the stochastic heat equation example. The left panel shows one realization of the random input field, while the right panel shows the corresponding temperature solution.}
    \label{fig:heat_solution}
\end{figure}

\begin{table}[htbp]
\centering
\caption{Performance comparison for the 100-dimensional stochastic
heat equation example. The reference value is obtained by the high-budget
ICE-vMFNM method.}
\label{tab:comparison_heat}
\begin{tabular}{lcccc}
\toprule
Method 
& $N_g$
& $\overline P_{\mathcal F}$
& $\varepsilon_{\mathrm{rel}}$
& $\delta$ \\
\midrule
ICE-vMFNM (reference)  
& $5.0\times 10^{4}$
& $7.63\times 10^{-5}$
& -- 
& 0.018 \\

ICE-vMFNM
& $4.2\times 10^3$
& $7.67\times 10^{-5}$
& $0.005$
& $0.098$ \\

PGGR-ICE-vMFNM
& $2.2\times 10^{2}$
& $7.57\times 10^{-5}$
& $0.008$
& $0.094$ \\

Random-ICE-vMFNM 

& $2.2\times 10^{2}$
& $4.39\times 10^{-5}$
& $0.425$
& $0.189$ \\ 
\bottomrule
\end{tabular}
\end{table}

Table~\ref{tab:comparison_heat} reports the performance comparison for the
100-dimensional stochastic heat equation example. The high-budget ICE-vMFNM
estimate is used as the reference value, giving
$P_{\mathcal F}^{\mathrm{ref}}=7.63\times 10^{-5}$. The reduced-budget
true-model ICE-vMFNM method gives a close estimate but still requires
$4.2\times 10^3$ actual evaluations of $g(\mathbf u)$. In comparison,
PGGR-ICE-vMFNM obtains a similarly accurate estimate,
$\overline P_{\mathcal F}=7.57\times 10^{-5}$, with only
$2.2\times 10^2$ high-fidelity evaluations on average, corresponding to an
approximately 19-fold reduction.

The Random-ICE-vMFNM baseline uses the same high-fidelity budget as
PGGR-ICE-vMFNM but substantially underestimates the failure probability. This
comparison indicates that the improvement is not merely due to adding a small
number of high-fidelity samples, but comes from the proposal-guided greedy
refinement strategy, which selects more informative samples in the
failure-relevant region.

\section{Conclusion}
\label{sec:conclusion}
We proposed a proposal-guided greedy surrogate refinement framework for
high-dimensional rare-event probability estimation. The method refines a
neural-network surrogate along the evolution of an adaptive importance sampling
proposal, avoiding the need for global surrogate accuracy over the full input
space. High-fidelity samples are selected near the surrogate-induced failure
boundary while maintaining diversity in a learned latent space, so that the
surrogate is improved in the region most relevant to proposal adaptation and
probability estimation.

The framework was instantiated with an ICE-vMFNM proposal update, and we analyzed
surrogate-induced misclassification, finite-sample estimation error, and one-step
proposal stability. These results show that the local surrogate error under the
proposal distribution controls both the surrogate-induced bias and the
perturbation of the proposal update.

Numerical experiments on multimodal, high-dimensional, and PDE-based rare-event
problems demonstrate that PGGR-ICE-vMFNM achieves accurate estimates with
substantially fewer high-fidelity evaluations than true-model adaptive importance
sampling. Future work will focus on multi-step error analysis and extensions to
more complex nonlinear and multimodal reliability problems.

\section*{Acknowledgments}
This work was supported by the MURI grant (FA9550-20-1-0358), the ONR Vannevar Bush Faculty Fellowship (N00014-22-1-2795), and the U.S. Department of Energy, Advanced Scientific Computing Research program, under the Scalable, Efficient and Accelerated Causal Reasoning Operators, Graphs and Spikes for Earth and Embedded Systems (SEA-CROGS) project (DE-SC0023191).
Additional funding was provided by GPU Cluster for Neural PDEs and Neural Operators to support MURI Research and Beyond, under Award \#FA9550-23-1-0671.

\appendix

\section{Proofs of the theoretical results}
\label{app:proofs}

\begin{proof}[Proof of Theorem~\ref{thm:one_step_proposal_stability}]
Since the derivative of \(\Phi\) is bounded by \(1/\sqrt{2\pi}\), we have
\[
    |h_{t+1}^\star(\mathbf u)-\widehat h_{t+1}(\mathbf u)|
    \le
    \frac{
    |g(\mathbf u)-\widehat g_t(\mathbf u)|
    }{
    \sigma_{t+1}\sqrt{2\pi}
    } .
\]
Changing measure from \(p\) to \(q_t\) and applying the Cauchy--Schwarz
inequality yields
\begin{align}
    \int
    |h_{t+1}^\star-\widehat h_{t+1}|p\,d\mathbf u
    &\le
    \frac{1}{\sigma_{t+1}\sqrt{2\pi}}
    \int
    |g-\widehat g_t|
    \frac{p}{q_t}
    q_t\,d\mathbf u
    \nonumber\\
    &\le
    \frac{\sqrt{C_w^{(2)}}}{\sigma_{t+1}\sqrt{2\pi}}
    \left\|
    g-\widehat g_t
    \right\|_{L^2(q_t)} .
    \label{eq:soft_indicator_l1_bound_app}
\end{align}
By the standard normalization inequality for probability densities,
\[
    \operatorname{TV}(\pi_{t+1}^\star,\widehat\pi_{t+1})
    \le
    \frac{2}{Z_{t+1}^\star}
    \int
    |h_{t+1}^\star-\widehat h_{t+1}|p\,d\mathbf u .
\]
Using \(Z_{t+1}^\star\ge z_{t+1}\) and
\eqref{eq:soft_indicator_l1_bound_app}, we obtain
\[
    \operatorname{TV}(\pi_{t+1}^\star,\widehat\pi_{t+1})
    \le
    \frac{
    2\sqrt{C_w^{(2)}}
    }{
    z_{t+1}\sigma_{t+1}\sqrt{2\pi}
    }
    \left\|
    g-\widehat g_t
    \right\|_{L^2(q_t)} .
\]
Finally, Assumption~\ref{ass:projection_stability} gives
\[
    \operatorname{TV}
    \left(
    q_{t+1}^{\star},
    \widehat q_{t+1}
    \right)
    \le
    L_{\mathcal Q}
    \operatorname{TV}(\pi_{t+1}^\star,\widehat\pi_{t+1}),
\]
which proves the result.
\end{proof}

\begin{proof}[Proof of Lemma~\ref{lem:surrogate_misclassification}]
For any \(0<\tau\le \tau_0\),
\[
    \mathcal M_t
    \subseteq
    \{|g(\mathbf u)|\le \tau\}
    \cup
    \{|g(\mathbf u)-\widehat g_t(\mathbf u)|>\tau\}.
\]
Therefore,
\[
    q_t(\mathcal M_t)
    \le
    q_t(|g(\mathbf u)|\le \tau)
    +
    q_t(|g(\mathbf u)-\widehat g_t(\mathbf u)|>\tau).
\]
By Assumption~\ref{ass:margin_condition},
\[
    q_t(|g(\mathbf u)|\le \tau)
    \le
    C_m\tau^\kappa .
\]
By Markov's inequality,
\[
    q_t(|g(\mathbf u)-\widehat g_t(\mathbf u)|>\tau)
    \le
    \frac{
    \|g-\widehat g_t\|_{L^2(q_t)}^2
    }{
    \tau^2
    }.
\]
Thus,
\[
    q_t(\mathcal M_t)
    \le
    C_m\tau^\kappa
    +
    \frac{
    \|g-\widehat g_t\|_{L^2(q_t)}^2
    }{
    \tau^2
    }.
\]
Optimizing the right-hand side with respect to \(\tau\) gives
\[
    q_t(\mathcal M_t)
    \le
    C
    \|g-\widehat g_t\|_{L^2(q_t)}^{\frac{2\kappa}{\kappa+2}},
\]
where \(C>0\) depends only on \(C_m\) and \(\kappa\).
\end{proof}

\begin{proof}[Proof of Theorem~\ref{thm:finite_sample_surrogate_error}]
We decompose the error as
\begin{align}
    \mathbb E
    \left[
    \left|
    \widehat P_{N,t}^{\mathrm{surr}}
    -
    P_{\mathcal F}
    \right|
    \right]
    &\le
    \left|
    P_{\widehat{\mathcal F},t}
    -
    P_{\mathcal F}
    \right|
    +
    \mathbb E
    \left[
    \left|
    \widehat P_{N,t}^{\mathrm{surr}}
    -
    P_{\widehat{\mathcal F},t}
    \right|
    \right].
    \label{eq:error_decomposition_app}
\end{align}

We first bound the surrogate-induced bias. Since
\[
    P_{\mathcal F}
    =
    \mathbb E_{q_t}
    \left[
    I(\mathbf u)\frac{p(\mathbf u)}{q_t(\mathbf u)}
    \right],
    \qquad
    P_{\widehat{\mathcal F},t}
    =
    \mathbb E_{q_t}
    \left[
    \widehat I_t(\mathbf u)\frac{p(\mathbf u)}{q_t(\mathbf u)}
    \right],
\]
we have
\begin{align}
    \left|
    P_{\widehat{\mathcal F},t}
    -
    P_{\mathcal F}
    \right|
    &\le
    \mathbb E_{q_t}
    \left[
    |\widehat I_t(\mathbf u)-I(\mathbf u)|
    \frac{p(\mathbf u)}{q_t(\mathbf u)}
    \right] \le
    C_w^{(\infty)}
    q_t(\mathcal M_t).
    \label{eq:bias_misclassification_app}
\end{align}
Applying Lemma~\ref{lem:surrogate_misclassification}, we obtain
\[
    \left|
    P_{\widehat{\mathcal F},t}
    -
    P_{\mathcal F}
    \right|
    \le
    C
    \|g-\widehat g_t\|_{L^2(q_t)}^{\frac{2\kappa}{\kappa+2}},
\]
where \(C>0\) depends on \(C_m\), \(\kappa\), and
\(C_w^{(\infty)}\). We next bound the finite-sample term. Let
\[
    Y_t(\mathbf u)
    =
    \widehat I_t(\mathbf u)\frac{p(\mathbf u)}{q_t(\mathbf u)},
    \qquad
    \mathbf u\sim q_t .
\]
Then $\mathbb E_{q_t}[Y_t]=P_{\widehat{\mathcal F},t}$,
and $\widehat P_{N,t}^{\mathrm{surr}}
    =
    \frac{1}{N}
    \sum_{i=1}^N Y_t(\mathbf u_i)$.
By Jensen's inequality,
\[
    \mathbb E
    \left[
    \left|
    \widehat P_{N,t}^{\mathrm{surr}}
    -
    P_{\widehat{\mathcal F},t}
    \right|
    \right]
    \le
    \sqrt{
    \operatorname{Var}
    \left(
    \widehat P_{N,t}^{\mathrm{surr}}
    \right)
    }
    =
    \frac{1}{\sqrt N}
    \sqrt{\operatorname{Var}_{q_t}(Y_t)}.
\]
Using the surrogate-induced zero-variance density $\widehat q_t^\star(\mathbf u)$, 
we compute
\begin{align}
    \operatorname{Var}_{q_t}(Y_t)
    &=
    \int
    \widehat I_t(\mathbf u)
    \frac{p(\mathbf u)^2}{q_t(\mathbf u)}
    d\mathbf u
    -
    P_{\widehat{\mathcal F},t}^2
    \nonumber\\
    &=
    P_{\widehat{\mathcal F},t}^2
    \left[
    \int
    \frac{
    \widehat q_t^\star(\mathbf u)^2
    }{
    q_t(\mathbf u)
    }
    d\mathbf u
    -
    1
    \right]
    \nonumber\\
    &=
    P_{\widehat{\mathcal F},t}^2
    \chi^2(\widehat q_t^\star\|q_t).
    \label{eq:variance_chi_square_app}
\end{align}
Therefore,
\[
    \mathbb E
    \left[
    \left|
    \widehat P_{N,t}^{\mathrm{surr}}
    -
    P_{\widehat{\mathcal F},t}
    \right|
    \right]
    \le
    \frac{
    P_{\widehat{\mathcal F},t}
    }{
    \sqrt{N}
    }
    \sqrt{
    \chi^2(\widehat q_t^\star\|q_t)
    } .
\]
Combining this estimate with
\eqref{eq:error_decomposition_app} and
\eqref{eq:bias_misclassification_app} gives
\eqref{eq:finite_sample_chi_square_bound}.

Finally, since $P_{\widehat{\mathcal F},t}
    \le
    P_{\mathcal F}
    +
    \left|
    P_{\widehat{\mathcal F},t}
    -
    P_{\mathcal F}
    \right|$, 
and the surrogate-induced probability error is bounded by the first term above,
we obtain
\eqref{eq:finite_sample_non_circular_bound}. This completes the proof.
\end{proof}

\section{Ablation study}
\label{sec:ablation_study}

We further investigate the sensitivity of the proposed method to two parameters
in the greedy refinement step: the diversity weight \(\beta\) and the number of
new high-fidelity samples \(m_{\mathrm{add}}\) selected at each adaptive
iteration. All ablation tests are performed on the 100-dimensional stochastic
diffusion example, while the remaining settings are kept the same as in
Section~\ref{subsec:stochastic_diffusion_example}. During adaptive refinement, the last
encoder layer is kept fixed so that the latent-space metric used in the greedy
selection rule remains stable.

We first study the effect of the diversity weight \(\beta\). In this test, we
fix \(m_{\mathrm{add}}=70\) and vary \(\beta\). As shown in
Fig.~\ref{fig:beta_sensitivity}, the choice of \(\beta\) affects both the bias
and the stability of the estimator. When \(\beta\) is too small, the selected
samples may concentrate near a limited part of the estimated failure boundary,
leading to insufficient coverage and possible underestimation of the failure
probability. As \(\beta\) increases, the diversity term improves the coverage of
the proposal-induced important region. However, an overly large \(\beta\) may
make the selected samples too dispersed, which can weaken boundary refinement
and increase the coefficient of variation. A moderate value, such as
\(\beta=0.5\), provides a stable balance between boundary proximity and
latent-space diversity.

\begin{figure}[t]
    \centering 
    \includegraphics[width = 0.75\textwidth]{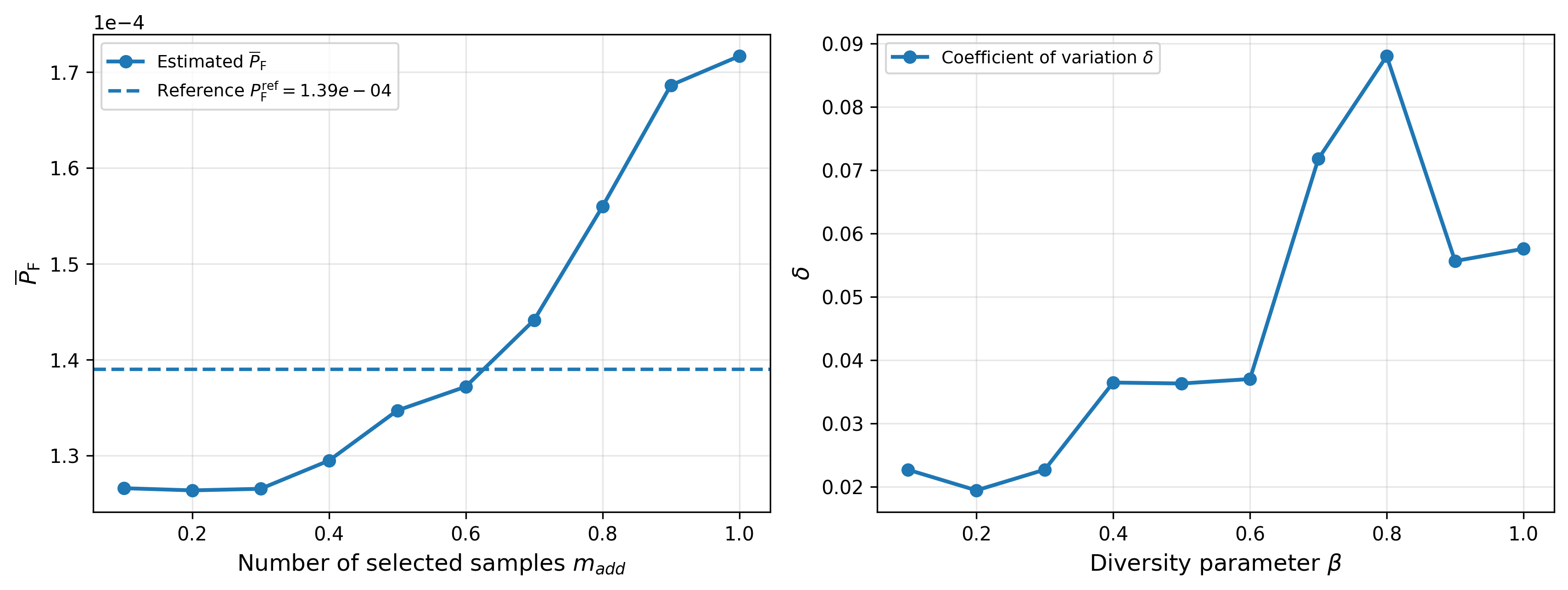}
    \vspace{-0.2cm}
    \caption{Sensitivity of the estimated failure probability and coefficient of
    variation to the diversity parameter \(\beta\) for the 100-dimensional
    stochastic diffusion example. Here \(m_{\mathrm{add}}=70\) is fixed.}
    \label{fig:beta_sensitivity}
\end{figure}

We next study the effect of \(m_{\mathrm{add}}\). In this test, we fix
\(\beta=0.5\) and vary the number of newly selected high-fidelity samples per
adaptive iteration. Figure~\ref{fig:madd_sensitivity} shows that increasing
\(m_{\mathrm{add}}\) generally improves the accuracy and reduces the coefficient
of variation. This is expected because more selected high-fidelity samples
provide stronger local correction of the surrogate near the proposal-induced
failure boundary. The improvement, however, comes with a higher evaluation cost.
In the reported high-dimensional experiments, \(m_{\mathrm{add}}=70\) gives a
reasonable accuracy--cost trade-off.

\begin{figure}[htbp]
    \centering 
    \includegraphics[width = 0.75\textwidth]{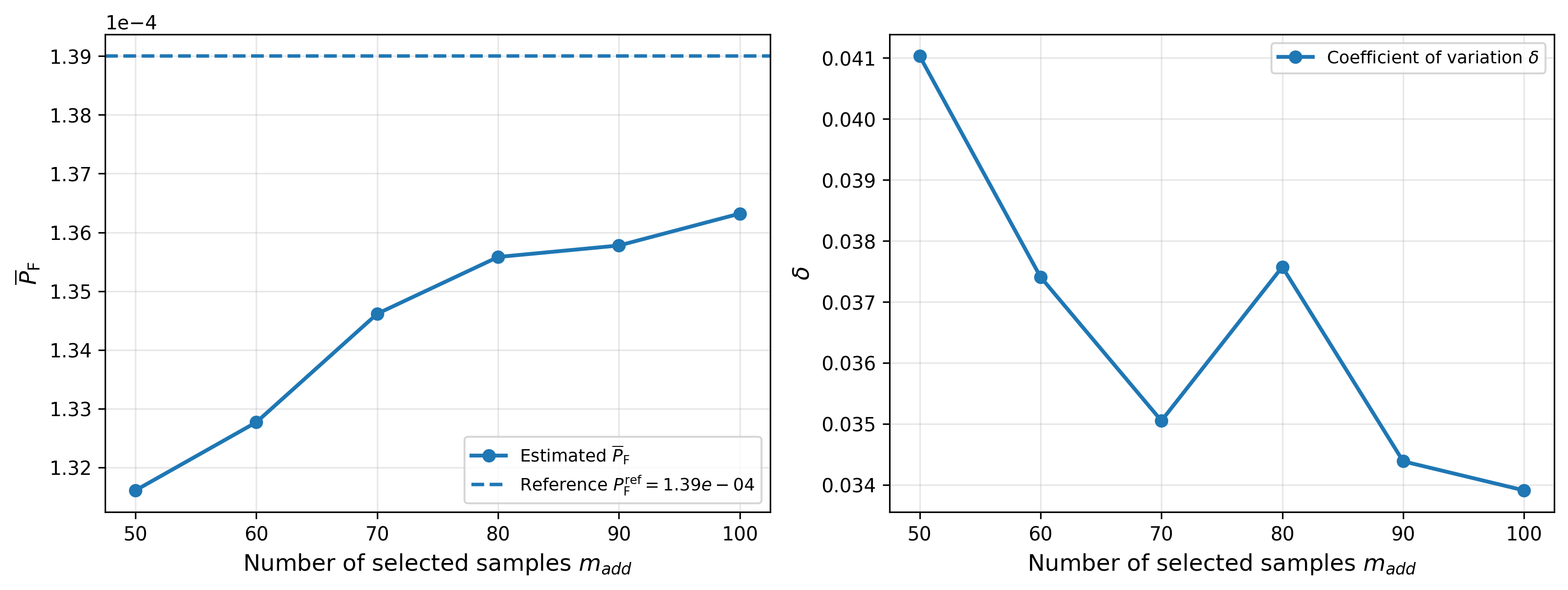}
    \vspace{-0.2cm}
    \caption{Sensitivity of the estimated failure probability and coefficient of
    variation to the number of selected high-fidelity samples
    \(m_{\mathrm{add}}\) for the 100-dimensional stochastic diffusion example.
    Here \(\beta=0.5\) is fixed.}
    \label{fig:madd_sensitivity}
\end{figure}

\bibliographystyle{unsrt}
\bibliography{references}

\end{document}